\newtheorem{theorem}{Theorem}[section]
\newtheorem{lemma}[theorem]{Lemma}
\newtheorem{proposition}[theorem]{Proposition}
\theoremstyle{definition}
\newtheorem{example}[theorem]{Example}
\theoremstyle{remark}
\newtheorem{remark}[theorem]{Remark}
\numberwithin{equation}{section}
\numberwithin{equation}{section}
\begin{document}

\title[The geometric mean errors for Markov-type measures]{Convergence order of the geometric mean errors for Markov-type measures}
\author{Sanguo Zhu}
\address{School of Mathematics and Physics, Jiangsu University of Technology,
Changzhou 213001, China.} \email{sgzhu@jsut.edu.cn}

\subjclass[2000]{Primary 28A80, 28A78; Secondary 94A15}
\keywords{geometric mean error, convergence order, Markov-type measures, irreducibility, Mauldin-Williams fractals.}

\begin{abstract}
We study the quantization problem with respect to the geometric mean error for Markov-type measures $\mu$ on a class of fractal sets. Assuming the irreducibility of the corresponding transition matrix $P$, we determine the exact convergence order of the geometric mean errors of $\mu$. In particular, we show that, the quantization dimension of order zero is independent of the initial probability vector when $P$ is irreducible, while this is not true if $P$ is reducible.
\end{abstract}
\maketitle

\section{Introduction}
In this paper, we study the asymptotic geometric mean errors in the quantization for Markov-type measures on a class of fractal sets. We refer to \cite{GL:00,GL:04} for mathematical foundations of quantization theory and \cite{GN:98} for its background in engineering technology.

For every $n\geq 1$, we set $\mathcal{D}_n:=\{\alpha\subset\mathbb{R}^q:1\leq{\rm card}(\alpha)\leq n\}$. Let $\nu$  be a Borel probability measure on $\mathbb{R}^q$. The $n$th quantization error  for $\nu$ of order $r$ is defined by (see \cite{GL:00,GL:04}):
\begin{eqnarray}\label{quanerrordef}
e_{n,r}(\nu):=\left\{\begin{array}{ll}\inf_{\alpha\in\mathcal{D}_n}\big(\int d(x,\alpha)^{r}d\nu(x)\big)^{\frac{1}{r}}
,\;\;\;\;\;\;r>0,\\
\inf_{\alpha\in\mathcal{D}_n}\exp\int\log d(x,\alpha)d\nu(x),\;\;\;r=0.\end{array}\right.
\end{eqnarray}
Here $d(\cdot,\cdot)$ is the metric induced by an arbitrary norm on $\mathbb{R}^q$.
For $r>0$, $e_{n,r}(\nu)$ agrees with the error in the approximation of $\nu$ by discrete probability measures supported on at most $n$ points,
in the sense of $L_r$-metrics \cite{GL:00}.

By \cite[Lemma 3.5]{GL:04}, the quantity $e_{n,0}(\nu)$---also called the $n$th geometric mean
error for $\nu$, equals the limit of $e_{n,r}(\nu)$ as $r$ tends to zero. In this sense, the quantization with respect to the geometric mean error is a limiting case of that in $L_r$-metrics. As one of the main aims of the quantization problem, we are concerned with the asymptotic properties of the quantization errors, including the upper (lower) quantization coefficient (of order $r$) and the upper (lower) quantization dimension (of order $r$).

For $s>0$, we define the $s$-dimensional upper and lower quantization coefficient for $\nu$ of order $r\in[0,\infty)$ by (cf. \cite{GL:00,PK:01})
\begin{eqnarray*}
\overline{Q}_r^s(\nu):=\limsup_{n\to\infty}n^{\frac{1}{s}}e_{n,r}(\nu),\;\;
\underline{Q}_r^s(\nu):=\liminf_{n\to\infty}n^{\frac{1}{s}}e_{n,r}(\nu).
\end{eqnarray*}
By \cite{GL:00,PK:01}, the upper (lower) quantization dimension $\overline{D}_r(\nu)$ ($\underline{D}_r(\nu)$) of order $r$, as defined below, is exactly the critical point at which the upper (lower) quantization coefficient jumps from zero to infinity:
\begin{eqnarray*}
\overline{D}_r(\nu):=\limsup_{n\to\infty}\frac{\log n}{-\log e_{n,r}(\nu)},\;\underline{D}_r(\nu):=\liminf_{n\to\infty}\frac{\log n}{-\log e_{n,r}(\nu)}.
\end{eqnarray*}
If $\overline{D}_r(\nu)=\underline{D}_r(\nu)$, the common value is denoted by $D_r(\nu)$ and called the quantization dimension for $\nu$ of order $r$.

Compared with the upper (lower) quantization dimension of order $r$, the upper (lower) quantization coefficient of order $r$ provides us with more accurate information on the asymptotics of the geometric mean errors; accordingly, much more effort is required to examine the finiteness and positivity of the latter.
\begin{remark}{\rm
The upper (lower) quantization dimension of $\nu$ of order zero is closely connected with the upper (lower) local dimension (cf. \cite{Fal:97}) as defined by
\[
\underline{\dim}_{\rm loc}\nu(x):=\liminf_{\epsilon\to 0}\frac{\log\nu(B_\epsilon(x))}{\log\epsilon},\;\overline{\dim}_{\rm loc}\nu(x):=\limsup_{\epsilon\to 0}\frac{\log\nu(B_\epsilon(x))}{\log\epsilon}.
\]
Here $B_\epsilon(x)$ denotes the closed ball of radius $\epsilon$ which is centered at a point $x\in\mathbb{R}^q$. As we showed in \cite{zhu:12}, if the upper and lower local dimension are both equal to $s$ for $\nu$-a.e. $x$, then $D_0(\nu)$ exists and equals $s$.
}\end{remark}
Next, let us recall a result of Graf and Luschgy. Let $(f_i)_{i=1}^N$ be a family of contractive similitudes on $\mathbb{R}^q$ with contraction ratios $(s_i)_{i=1}^N$. By \cite{Hut:81}, there exists a unique non-empty compact set $K$ satisfying
\begin{eqnarray}\label{g30}
K=f_1(K)\cup f_2(K)\cdots\cup f_N(K).
\end{eqnarray}
This set is called the self-similar set associated with $(f_i)_{i=1}^N$. Also, By \cite{Hut:81}, for a probability vector $(q_i)_{i=1}^N$, there exists a unique
Borel probability measure $\nu$ satisfying $\nu=\sum_{i=1}^Nq_i\nu\circ f_i^{-1}$. This measure is called the self-similar measure associated with $(f_i)_{i=1}^N$ and $(q_i)_{i=1}^N$.

We say that $(f_i)_{i=1}^N$ satisfies the open set condition (OSC), if there exists a non-empty bounded open set $U$ such that
$f_i(U)\cap f_j(U)=\emptyset$ for all $1\leq i\neq j\leq N$ and $f_i(U)\subset U$ for all $1\leq i\leq N$.
Let $k_r,r\geq 0$, be given by
\[
k_0:=\frac{\sum_{i=1}^Nq_i\log q_i}{\sum_{i=1}^Nq_i\log s_i};\;\;\sum_{i=1}^N(q_is_i^r)^{\frac{k_r}{k_r+r}}=1,\;r>0.
\]
Assume that $(f_i)_{i=1}^N$ satisfies the OSC. Let $\nu$ be the self-similar measure associated with $(f_i)_{i=1}^N$ and $(q_i)_{i=1}^N$. Graf and Luschgy proved \cite{GL:01,GL:04} that
$$
D_r(\nu)=k_r;\;\;0<\underline{Q}_r^{k_r}(\nu)\leq\overline{Q}_r^{k_r}(\nu)<\infty.
$$

In the present paper, we study the finiteness and positivity of the upper and lower quantization coefficient of order zero for Markov-type measures. Some results on the quantization for such measures in $L_r$-metrics (with $r>0$) are contained in \cite{LJL:01}.  Recently, a complete treatment in this direction is given in \cite{kz:14}, where the corresponding transition matrix is allowed to be reducible. Next, let us recall some definitions; we refer to \cite{BED:89,EM:92,MW:88} for more details.

Let $P=(p_{ij})_{N\times N}$ be a row-stochastic matrix, i.e.,
$p_{ij}\geq 0,1\leq i,j\leq N$; and $\sum_{j=1}^Np_{ij}=1,1\leq i\leq N$. It is easy to see that, $1$ is an eigenvalue of $P$ of largest absolute value (cf. \cite[Theorem 8.1.22]{HJ:85}). When $P$ is irreducible, by the Perron-Frobenius theorem (cf. \cite[Theorem 8.4.4]{HJ:85}), there exists a unique normalized positive left (row) eigenvector $v=(v_1,\ldots,v_N)$ of $P$ with respect to the eigenvalue $1$. We will need the following notations:
\begin{eqnarray*}
&&\theta:={\rm empty\;word},\;\;G_0:=\{\theta\},\;G_1:=\{1,\ldots N\};\\&&G_k:=\{\sigma\in G_1^k:p_{\sigma_1\sigma_2}\cdots p_{\sigma_{k-1}\sigma_k}>0\},\;k\geq 2;\\
&&G^*:=\bigcup_{k\geq 0} G_k,\;G_\infty:=\{\sigma\in G_1^{\mathbb{N}}:p_{\sigma_h\sigma_{h+1}}>0\;\;{\rm for\; all}\;\;h\geq 1\}.
\end{eqnarray*}

We define $|\sigma|:=k$ for $\sigma\in G_k$ and $|\sigma|:=\infty$ for $\sigma\in G_1^{\mathbb{N}}$. For every $\sigma\in G^*$ with $|\sigma|\geq k$ or
$\sigma\in G_\infty$, we write $\sigma|_k:=(\sigma_1,\ldots,\sigma_k)$. For $\sigma\in G^*$ and $\omega\in G^*\cup G_\infty$ with $(\sigma_{|\sigma|}, \omega_1)\in G_2$, then we set
$$
\sigma\ast\omega=(\sigma_1,\sigma_2,\ldots,\sigma_{|\sigma|},\omega_1,\ldots,\omega_{|\omega|}).
$$

Let $J_i,i\in G_1$, be non-empty compact subsets of $\mathbb{R}^q$ with $J_i=\overline{\rm int(J_i)}$ for all $i\in G_1$, where $\overline{B}$ and ${\rm int}(B)$ respectively denote the closure and interior in $\mathbb{R}^q$ of a set $B\subset\mathbb{R}^q$. We call these sets cylinders of order one. For each $i\in G_1$, let $J_{ij},\;(i,j)\in G_2$, be non-overlapping subsets of $J_i$ such that $J_{ij}$ is geometrically similar to $J_j$
and $|J_{ij}|/|J_j|=c_{ij}$, where $|A|$ denotes the diameter of a set $A$ and $c_{ij}\in(0,1)$. We call these sets cylinders of order two. Assume that cylinders of order $k$ are determined. Let $J_{\sigma\ast i_{k+1}},\sigma\ast i_{k+1}\in G_{k+1}$,
be non-overlapping subsets of $J_\sigma$ such that $J_{\sigma\ast i_{k+1}}$ is geometrically similar to $J_{i_{k+1}}$. Hence, by induction, cylinders of order $k$
are determined for all $k\geq 1$. Then, we get a Mauldin-Williams fractal set $E$ (cf. \cite{BED:89,MW:88}):
\begin{equation*}
E:=\bigcap_{k\geq 1}\bigcup_{\sigma\in G_k}J_\sigma.
\end{equation*}
The set $E$ need not be a self-similar set, and in general, $E$ does not enjoy the nice invariance property as in (\ref{g30}). This will cause much difficulty in the study of the geometric mean error. For this reason, we assume the following separation property for $E$: there exists some constant $0<t<1$ such that for every $\sigma\in G^*$ and $j_l$ with $(\sigma_{|\sigma|},j_l)\in G_2,l=1,2$,
\begin{equation}\label{g4}
d(J_{\sigma\ast j_1},J_{\sigma\ast j_2})\geq t\max\{|J_{\sigma\ast j_1}|,|J_{\sigma\ast j_2}|)\}.
\end{equation}

Let $(q_i)_{i=1}^N$ be an arbitrary probability vector with $q_i>0$ for all $i\in G_1$. By Kolmogorov consistency theorem, there exists a unique Markov-type measure $\widetilde{\mu}$ on $G_\infty$ (cf. \cite{PW:82}) such that, for every $k\geq 1$ and $\sigma=(\sigma_1\ldots \sigma_k)\in G_k$,
\[
\widetilde{\mu}([\sigma])=q_{\sigma_1}p_{\sigma_1\sigma_2}\cdots p_{\sigma_{k-1}\sigma_k},
\]
where $[\sigma]:=\{\sigma\ast\omega:\omega\in G_\infty,\;(\sigma_{|\sigma|},\omega_1)\in G_2\}$.
With the assumption (\ref{g4}), we have the following bijection $g:G_\infty\to E$:
\[
g(\sigma):=\bigcap_{k\geq 1}J_{\sigma|_k},\;\;\sigma\in G_\infty.
\]
Let $\mu:=\widetilde{\mu}\circ g^{-1}$. Then $\mu$ is a Markov-type measure on $E$ satisfying
\begin{eqnarray}\label{markovmeasure}
\mu(J_\sigma)= q_{\sigma_1}p_{\sigma_1\sigma_2}\cdots p_{\sigma_{k-1}\sigma_k}\;\; {\rm for}\;\; \sigma=(\sigma_1\ldots \sigma_k)\in G_k,\;k\geq 1.
\end{eqnarray}
For each $\sigma\in G^*\setminus{\theta}$, the way in which $\mu$ distributes its measure among the sub-cylinders of $J_\sigma$ depends on the last entry of $\sigma$. This is different from that of the measures as considered in \cite{Zhu:13}.
From now on, we assume
\begin{equation}\label{cardpij>0}
{\rm card}\big(\{j\in G_1:(i,j)\in G_2\}\big)\geq 2\;\; {\rm for\; all}\;\; i\in G_1.
\end{equation}
Under the assumption (\ref{cardpij>0}), for each $\sigma\in G^*$, the cylinder $J_\sigma$ has at least two sub-cylinders of order $|\sigma|+1$; in addition, we have that $\max_{(i,j)\in G_2}p_{ij}<1$.

As the main result of the present paper, we will determine the exact convergence order of the geometric mean error for $\mu$. That is,
\begin{theorem}\label{mthm1}
Assume that (\ref{g4}) and (\ref{cardpij>0}) are satisfied. Let $\mu$ be as given in (\ref{markovmeasure}). Assume that
the transition matrix $P$ is irreducible. Then we have, $D_0(\mu)=s_0$ and $0<\underline{Q}_0^{s_0}(\mu)\leq\overline{Q}_0^{s_0}(\mu)<\infty$, where
\begin{eqnarray}\label{s0}
s_0:=\frac{\sum_{i=1}^N v_i\sum_{j:(i,j)\in G_2}p_{ij}\log p_{ij}}{\sum_{i=1}^N v_i\sum_{j:(i,j)\in G_2}p_{ij}\log c_{ij}}.
\end{eqnarray}
and $(v_i)_{i=1}^N$ is the normalized positive left eigenvector of $P$ with respect to $1$.
\end{theorem}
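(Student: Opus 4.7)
The plan is to establish $\overline{Q}_0^{s_0}(\mu)<\infty$ and $\underline{Q}_0^{s_0}(\mu)>0$ separately, which together with the jump characterization of the quantization dimension give $D_0(\mu)=s_0$. I follow the Graf--Luschgy template for geometric mean errors but adapt it to the Markov setting via the Perron--Frobenius theorem.

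\emph{Step 1 (Ergodic averages).} Write $h:=-\sum_i v_i\sum_{(i,j)\in G_2}p_{ij}\log p_{ij}$ and $\eta:=-\sum_i v_i\sum_{(i,j)\in G_2}p_{ij}\log c_{ij}$, both positive by (\ref{cardpij>0}), so $s_0=h/\eta$. From (\ref{markovmeasure}) and the iterated similarity $|J_\sigma|=|J_{\sigma_{|\sigma|}}|\prod_{l=1}^{|\sigma|-1}c_{\sigma_l\sigma_{l+1}}$, both $\log\mu(J_\sigma)$ and $\log|J_\sigma|$ are additive along the edges $(\sigma_l,\sigma_{l+1})\in G_2$. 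Irreducibility of $P$ together with (\ref{cardpij>0}) gives ergodic convergence of the $l$-step distribution $(q_i)P^l$ to $v$, so
$$\int\log\mu(J_{\sigma|_k})\,d\widetilde\mu(\sigma)=-hk+O(1),\qquad \int\log|J_{\sigma|_k}|\,d\widetilde\mu(\sigma)=-\eta k+O(1),$$
with the $O(1)$ independent of the initial vector. A quadratic version of the same estimate yields the variance bound $\mathrm{Var}_{\widetilde\mu}(\log\mu(J_{\sigma|_k}))=O(k)$.

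\emph{Step 2 (Upper bound).} For $\epsilon\in(0,1)$ define the measure-stopping antichain $\Gamma_\epsilon:=\{\sigma\in G^*:\mu(J_\sigma)\leq\epsilon<\mu(J_{\sigma^-})\}$, with $\sigma^-$ the parent word. Because $p_{\min}:=\min_{(i,j)\in G_2}p_{ij}>0$, each $\sigma\in\Gamma_\epsilon$ has $\mu(J_\sigma)\in[p_{\min}\epsilon,\epsilon]$, so $\{J_\sigma\}_{\sigma\in\Gamma_\epsilon}$ partitions $E$ (mod $\mu$) and $|\Gamma_\epsilon|\asymp\epsilon^{-1}$. Viewing $\tau_\epsilon(\omega):=\min\{k:\mu(J_{\omega|_k})\leq\epsilon\}$ as an integrable stopping time on $G_\infty$ with $\log\mu(J_{\omega|_{\tau_\epsilon}})=\log\epsilon+O(1)$, an optional-stopping argument built on Step 1 yields $\int\tau_\epsilon\,d\widetilde\mu=h^{-1}\log(1/\epsilon)+O(1)$ and hence
$$\sum_{\sigma\in\Gamma_\epsilon}\mu(J_\sigma)\log|J_\sigma|=\int\log|J_{\omega|_{\tau_\epsilon}}|\,d\widetilde\mu(\omega)=\tfrac{1}{s_0}\log\epsilon+O(1).$$
Picking one point of $J_\sigma\cap E$ for each $\sigma\in\Gamma_\epsilon$ gives a codebook $\alpha_\epsilon\in\mathcal D_{|\Gamma_\epsilon|}$ with $\int\log d(x,\alpha_\epsilon)\,d\mu\leq\sum_\sigma\mu(J_\sigma)\log|J_\sigma|$, so $e_{|\Gamma_\epsilon|,0}(\mu)\leq C\epsilon^{1/s_0}\asymp C|\Gamma_\epsilon|^{-1/s_0}$; monotonicity of $e_{n,0}$ in $n$ upgrades this to $\overline{Q}_0^{s_0}(\mu)<\infty$.

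\emph{Step 3 (Lower bound).} Given $n$, choose $\epsilon$ with $|\Gamma_\epsilon|\geq Kn$ for $K$ large, and let $\alpha\in\mathcal D_n$ be arbitrary. By the separation hypothesis (\ref{g4}), for each $a\in\alpha$ the number of $\sigma\in\Gamma_\epsilon$ with $\operatorname{dist}(a,J_\sigma)<\tfrac{t}{1+t}|J_\sigma|$ is bounded by a constant $C_0$ depending only on $t$ and the ambient dimension, so at most $C_0n$ cylinders in $\Gamma_\epsilon$ are ``close'' to $\alpha$. On every remaining ``far'' $\sigma$ one has $d(x,\alpha)\geq\tfrac{t}{1+t}|J_\sigma|$ for all $x\in J_\sigma$, yielding a far contribution $\geq\tfrac{1}{s_0}\log\epsilon-C$ via Step 1. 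The close contribution is controlled by a Frostman-type estimate $\mu(B(a,r))\leq C_1 r^{s_0-\delta}$ uniform in $a$ (another consequence of Step 1 for small $\delta$) together with a layer-cake computation, showing $\int_{\mathrm{close}}|\log d(x,\alpha)|\,d\mu=o(1)$ as $K\to\infty$. Combining yields $\int\log d(x,\alpha)\,d\mu\geq\tfrac{1}{s_0}\log\epsilon-C'$, hence $e_{n,0}(\mu)\geq cn^{-1/s_0}$ and $\underline{Q}_0^{s_0}(\mu)>0$.

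\emph{Main obstacle.} The principal difficulty is the quantitative ergodic input of Step 1: in the Markov setting the additive decomposition of $\log\mu(J_\sigma)$ involves edge weights $\log p_{\sigma_l\sigma_{l+1}}$ that depend on two coordinates, destroying the Bernoulli factorization available to Graf--Luschgy in the self-similar case. The replacement is ergodic convergence of $P^l$ toward the rank-one projector $\mathbf 1 v^\top$, which is precisely where irreducibility of $P$ enters; the exponential (or at least Cesaro) rate is what allows the effect of the initial vector $(q_i)$ to be absorbed into an $O(1)$ remainder. If $P$ were reducible, distinct recurrent classes would generate distinct Perron eigenvectors and distinct pairs $(h,\eta)$, and the initial vector would tip the asymptotic balance between them---this is the structural reason behind the dependence of $D_0(\mu)$ on $(q_i)$ in the reducible case announced in the abstract.
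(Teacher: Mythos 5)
Your Steps 1 and 2 capture the same ergodic input the paper uses (the Ces\`aro convergence of $P^k$ toward the rank-one projection $uv$, which is Lemma~\ref{lem0} and Lemma~\ref{g16} in the paper), and your upper bound via a measure-stopping antichain $\Gamma_\epsilon$ is sound. But Step~3 has a genuine gap, and it is precisely at the point where the paper does something different and harder. The uniform Frostman estimate $\mu(B(a,r))\leq C_1 r^{s_0-\delta}$ is \emph{false} in general for these Markov-type measures: the local dimension of $\mu$ ranges over a nondegenerate interval $[\alpha_{\min},\alpha_{\max}]$ with typically $\alpha_{\min}<s_0$ strictly (already for self-similar Bernoulli measures with unequal weights), so there are points where $\mu(B(x,r))\gtrsim r^{\alpha_{\min}}$. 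Step~1 gives only $\widetilde\mu$-a.e.\ asymptotics, not a uniform bound. The only uniform Frostman-type bound available is the one in the paper's Lemma~2.2, with the smaller exponent $A_4=\log\overline p/\log\underline c$, which is in general strictly less than $s_0$.

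If you replace your estimate by the correct uniform one with exponent $A_4$, the ``close'' contribution is no longer $o(1)$: summing over at most $C_0 n$ close cylinders, each with $\mu(J_\sigma)\lesssim\epsilon\asymp(Kn)^{-1}$ and $|\log|J_\sigma||\lesssim\log(1/\epsilon)$, you get an error of order $\tfrac{C}{K}(\log K+\log n)$. This $\log n$ term does not wash out: exponentiating the resulting lower bound on $\int\log d(x,\alpha)\,d\mu$ yields $e_{n,0}(\mu)\gtrsim n^{-1/s_0-C/K}$, which proves $\underline D_0(\mu)\geq s_0$ but not $\underline Q_0^{s_0}(\mu)>0$, the sharper and harder half of the theorem. (A minor additional slip: your constant $\tfrac{t}{1+t}$ exceeds $t/2$, so the triangle inequality against the separation condition (\ref{g4}) does not force distinct $\sigma,\omega\in\Gamma_\epsilon$ to be ``close'' to different codewords.) The paper avoids this obstruction not with a Frostman bound but with a structural optimality argument (Lemma~2.4 and Remark~\ref{z20}): for an optimal $\alpha\in C_{\psi_j}(\mu)$, the number of codewords in the $4^{-1}tc_\sigma$-neighborhood of each $J_\sigma$, $\sigma\in\Lambda_j$, is bounded uniformly in $j$, because otherwise one could redistribute points to an uncovered cylinder and strictly improve $\alpha$; the uniform control comes from the increment bound (\ref{z8}) for the pull-back measures $\nu_\sigma$ and the uniform lower bound $\hat e_k(\nu_\sigma)\geq B_k$ of Remark~\ref{z7}. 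This is what produces the genuine $O(1)$ remainder in Lemma~\ref{characterization} and hence the positivity of $\underline Q_0^{s_0}(\mu)$. You would need to import an argument of this type (or an equivalent sharp lower-bound mechanism) to complete Step~3.
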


By Theorem  \ref{mthm1}, when the transition matrix $P$ is irreducible, $D_0(\mu)$ is independent of the initial probability vector $(q_i)_{i=1}^N$. In this case, according to \cite{DGSH:91,EM:92}, $D_0(\mu)$ coincides with the Hausdorff dimension of $\mu$. At the end of the paper, we will give an example to show that, $(q_i)_{i=1}^N$ usually plays a role in the quantization with respect to the geometric mean error for $\mu$ when the transition matrix is reducible.

\section{A characterization of the geometric mean error}

For every $k\geq 2$ and $\sigma=(\sigma_1,\ldots,\sigma_k)\in G_k$, we write
\begin{eqnarray*}
\sigma^-:=\sigma|_{k-1};\;\;p_\sigma:=p_{\sigma_1\sigma_2}\cdots p_{\sigma_{k-1}\sigma_k},\;c_\sigma:=c_{\sigma_1\sigma_2}\cdots c_{\sigma_{k-1}\sigma_k}.
\end{eqnarray*}
If $|\sigma|=1$, we define $p_\sigma=c_\sigma=1$ and $\sigma^-=\theta$. If $\sigma,\omega\in G^*$ satisfy $|\sigma|\leq|\omega|$ and $\sigma=\omega|_{|\sigma|}$, then write $\sigma\prec\omega$. Set
\begin{eqnarray}\label{lambdaj}
&&\underline{p}:=\min_{(i,j)\in G_2}p_{ij},\;\underline{c}:=\min_{(i,j)\in G_2}c_{ij},\;
\overline{p}:=\max_{(i,j)\in G_2}p_{ij},\;\overline{c}:=\max_{(i,j)\in G_2}c_{ij};\nonumber\\
&&\Lambda_j:=\{\sigma\in G^*:p_{\sigma^-}\geq \underline{p}^j>p_\sigma\},\;\;
\psi_j:={\rm card}(\Lambda_j);\label{g9}\\&&k_{1j}:=\min_{\sigma\in\Lambda_j}|\sigma|,\;k_{2j}:=\max_{\sigma\in\Lambda_j}|\sigma|;
\nonumber\\ &&\underline{P}_0^s(\mu):=\liminf_{j\to\infty}\psi_j^{\frac{1}{s}}e_{\psi_j}(\mu),\;
\overline{P}_0^s(\mu):=\limsup_{j\to\infty}\psi_j^{\frac{1}{s}}e_{\psi_j}(\mu),\;s>0.\nonumber
\end{eqnarray}
Without loss of generality, we assume that $|J_i|=1$ for all $i\in G_1$. Thus,
\[
|J_\sigma|=c_\sigma,\;\;\sigma\in G_k,\;k\geq 1.
\]
\begin{lemma}\label{g7}
(i) There exist constants $A_1,A_2>0$ such that
\[
A_1j\leq k_{1j}\leq k_{2j}\leq A_2j.
\]
(ii) $\underline{Q}_0^s(\mu)>0$ iff $\underline{P}_0^s(\mu)>0$ and $\overline{Q}_0^s(\mu)<\infty$ iff $\overline{P}_0^s(\mu)<\infty$.
\end{lemma}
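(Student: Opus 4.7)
The plan is to treat the two assertions separately: part (i) by an elementary length-versus-level comparison, and part (ii) by a monotonicity sandwich relating $n^{1/s}e_{n,0}(\mu)$ to the subsequence values $\psi_j^{1/s}e_{\psi_j,0}(\mu)$. For (ii) the crucial input is the uniform bound $\psi_{j+1}/\psi_j\le\underline{p}^{-2}$, which will follow from a counting argument inside each cylinder indexed by $\Lambda_j$.

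For (i), fix $\sigma\in\Lambda_j$ of length $k$. Because $p_\sigma$ is a product of $k-1$ transition entries lying in $[\underline{p},\overline{p}]$, one has $p_\sigma\ge\underline{p}^{k-1}$; similarly, for $k\ge 2$, $p_{\sigma^-}$ is a product of $k-2$ such entries and so $p_{\sigma^-}\le\overline{p}^{k-2}$. Inserting these into the defining sandwich $p_\sigma<\underline{p}^j\le p_{\sigma^-}$ and taking logarithms (using $0<\underline{p}\le\overline{p}<1$, the upper bound being ensured by (\ref{cardpij>0})) yields $k\ge j+2$ and $k\le 2+j\log\underline{p}/\log\overline{p}$, from which the constants $A_1$ and $A_2$ are read off.

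For (ii), I first claim that every $\sigma'\in\Lambda_{j+1}$ is a proper descendant of a unique $\sigma\in\Lambda_j$. For existence, follow the prefixes of $\sigma'$ until $p$ first drops below $\underline{p}^j$; this produces a prefix lying in $\Lambda_j$, and uniqueness is the antichain property. To see the prefix is proper, note that for any $\sigma\in\Lambda_j$ of length $k\ge 2$ one has $p_\sigma=p_{\sigma^-}\cdot p_{\sigma_{k-1}\sigma_k}\ge\underline{p}^j\cdot\underline{p}=\underline{p}^{j+1}$, which rules out $\sigma\in\Lambda_{j+1}$. Given this, fix $\sigma\in\Lambda_j$ and consider its $\Lambda_{j+1}$-descendants: conditioning the Markov chain on its state at time $|\sigma|$, these descendants form a stopping antichain whose total $p$-mass equals $p_\sigma<\underline{p}^j$, while each individual descendant $\sigma'$ satisfies $p_{\sigma'}\ge\underline{p}\cdot p_{(\sigma')^-}\ge\underline{p}^{j+2}$. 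Dividing gives at most $\underline{p}^{-2}$ descendants per $\sigma$, whence $\psi_{j+1}\le\underline{p}^{-2}\psi_j$.

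Given the ratio bound and the monotonicity of $n\mapsto e_{n,0}(\mu)$, for $\psi_j\le n<\psi_{j+1}$ one obtains
\[
\underline{p}^{2/s}\psi_{j+1}^{1/s}e_{\psi_{j+1},0}(\mu)\le n^{1/s}e_{n,0}(\mu)\le\underline{p}^{-2/s}\psi_j^{1/s}e_{\psi_j,0}(\mu),
\]
and since $n=\psi_j$ itself is attained, the sequence $(\psi_j^{1/s}e_{\psi_j,0}(\mu))_j$ is a subsequence of $(n^{1/s}e_{n,0}(\mu))_n$. Passing to liminfs and limsups on both sides then yields $\underline{p}^{2/s}\underline{P}_0^s(\mu)\le\underline{Q}_0^s(\mu)\le\underline{P}_0^s(\mu)$ and $\overline{P}_0^s(\mu)\le\overline{Q}_0^s(\mu)\le\underline{p}^{-2/s}\overline{P}_0^s(\mu)$, which are exactly the asserted equivalences. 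The main obstacle is the offspring estimate $\le\underline{p}^{-2}$: the structural interlocking of $\Lambda_j$ with $\Lambda_{j+1}$ uses only the product form of $p_\sigma$, but turning this into the clean quantitative count requires both the Markov identity $\sum_jp_{ij}=1$ at each state and the positive lower bound $\underline{p}>0$ on nonzero transition probabilities; once that is in place, the remainder of (ii) is a routine monotonicity sandwich and (i) is immediate.
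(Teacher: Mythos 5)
Your proposal is correct, and part (i) is essentially the paper's argument: both prove the length bounds by sandwiching $p_\sigma$ and $p_{\sigma^-}$ between powers of $\underline{p}$ and $\overline{p}$ (the paper phrases the upper bound via $N_1:=\min\{h:\overline{p}^h<\underline{p}\}$ rather than via $\log\underline{p}/\log\overline{p}$, but these are the same idea). Both proofs also reduce (ii) to the uniform ratio bound $\psi_{j+1}\le N_2\psi_j$, which is the one genuine step. Where you diverge is in how you obtain that bound. The paper counts: each $\tau\in\Lambda_{j+1}$ descends (not strictly) from a unique $\sigma\in\Lambda_j$, and since every extension of $\sigma$ by $N_1+1$ letters already has $p$-mass below $\underline{p}^{j+1}$, the $\Lambda_{j+1}$-descendants of $\sigma$ live within $N_1+1$ levels of $\sigma$, giving $\psi_{j+1}\le N^{N_1+1}\psi_j$. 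You instead use a mass/pigeonhole argument: the $\Lambda_{j+1}$-descendants of $\sigma$ form a full stopping antichain of the Markov tree below $\sigma$ (using row-stochasticity and $p_\sigma\ge\underline{p}^{j+1}$), so their $p$-masses sum to $p_\sigma<\underline{p}^j$, while each individual mass is $\ge\underline{p}^{j+2}$, yielding $\psi_{j+1}\le\underline{p}^{-2}\psi_j$. Your bound is tighter and avoids introducing $N_1$ into part (ii), making (ii) logically independent of (i); the paper's count is blunter but needs only the length estimate already proved in (i). Both are valid, and you also spell out the monotonicity-sandwich step that the paper delegates to \cite{Zhu:13} by reference.

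Two small points worth being explicit about if you were to write this up: first, you use that elements of $\Lambda_j$ have length $\ge 2$ (so that $p_\sigma=p_{\sigma^-}p_{\sigma_{k-1}\sigma_k}\ge\underline{p}\,p_{\sigma^-}$ makes sense); this holds because $p_\sigma=1$ for $|\sigma|=1$, which cannot satisfy $\underline{p}^j>p_\sigma$. Second, the subsequence step in the final sandwich uses $\psi_j\to\infty$; this follows, e.g., from $\sum_{\sigma\in\Lambda_j}m_\sigma=1$ with $m_\sigma<\overline{q}\,\underline{p}^j$, or from the observation that each $\sigma\in\Lambda_j$ has at least two $\Lambda_{j+1}$-descendants, so $\psi_{j+1}\ge 2\psi_j$.
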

\begin{proof}
Let $N_1:=\min\{h\geq 1:\overline{p}^h<\underline{p}\}$.
Let $\sigma^{(l)}\in G_{k_{lj}}\cap\Lambda_j,l=1,2$. Then
\begin{eqnarray*}
\underline{p}^{k_{1j}}\leq p_{\sigma^{(1)}}<\underline{p}^j,\;\;\overline{p}^{k_{2j}-1}\geq
p_{\sigma^{(2)}}\geq \underline{p}^{j+1}\geq\overline{p}^{N_1(j+1)}.
\end{eqnarray*}
It follows that $j\leq k_{1j}\leq k_{2j}\leq N_1(j+1)+1\leq 3N_1j$, for all $j\geq 1$. Hence (i) follows by setting $A_1:=1$ and $A_2:=3N_1$. As in \cite{Zhu:13},
to see (ii), it suffices to show that for some constant $N_2>0$ such that $\psi_j\leq\psi_{j+1}\leq N_2\psi_j$. In fact, the first inequality is clear;
to see the second, we note that, for every $\sigma\in\Lambda_j$ and every $\omega\in G_{N_1+1}$ with $(\sigma_{|\sigma|}, \omega_1)\in G_2$, we have
$p_{\sigma\ast\omega}<\underline{p}^j \overline{p}^{N_1}<\underline{p}^{j+1}$.
This implies that $\psi_{j+1}\leq N^{N_1+1}\psi_j$. The lemma follows.
\end{proof}
\subsection{Push-forward and pull-back measures}
For each $\sigma\in G^*\setminus\{\theta\}$, we take an arbitrary contracting similitude $f_\sigma$ on $\mathbb{R}^q$ of contraction ratio $c_\sigma$ and define $\nu_\sigma:=\mu(\cdot|J_\sigma)\circ f_\sigma$. Then, since $f_\sigma$ is a Borel bijection, $\nu_\sigma$ is a measure supported on $K(\sigma)= f_\sigma^{-1}(J_\sigma)$ satisfying
\begin{eqnarray}\label{z17}
\mu(\cdot|J_\sigma)=\nu_\sigma\circ f_\sigma^{-1},\;\;\sigma\in G^*\setminus\{\theta\}.
\end{eqnarray}
For a finite set $\alpha\subset\mathbb{R}^q$ of cardinality $L$, by (\ref{z17}), we have
\begin{eqnarray}\label{g6}
&&\int_{J_\sigma}\log d(x,\alpha)d\mu(x)=\mu(J_\sigma)\int\log d(x,\alpha)d\nu_\sigma\circ f_\sigma^{-1}(x)\nonumber\\&&=\mu(J_\sigma)\int\log d(f_\sigma(x),\alpha)d\nu_\sigma(x)\geq\mu(J_\sigma)(\log c_\sigma+\hat{e}_L(\nu_\sigma)).
\end{eqnarray}
\begin{lemma}
There exist constants $A_3,A_4>0$, such that
\begin{eqnarray}\label{s8}
\sup_{\sigma\in G^*\setminus\{\theta\}}\sup_{x\in \mathbb{R}^q}\nu_\sigma(B(x,\epsilon))\leq A_3\epsilon^{A_4}\;\;{\rm for\; all}\;\;\epsilon>0.
\end{eqnarray}
\end{lemma}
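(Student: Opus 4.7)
The plan is to translate the problem to one about $\mu$, then use the Markov structure together with a stopping-time partition into sub-cylinders to obtain a polynomial decay rate. Concretely, since $f_\sigma$ is a similitude of ratio $c_\sigma$, relation (\ref{z17}) gives
$$
\nu_\sigma(B(x,\epsilon))=\frac{\mu\big(B(f_\sigma(x),c_\sigma\epsilon)\cap J_\sigma\big)}{\mu(J_\sigma)}.
$$
Write $y=f_\sigma(x)$ and $r=c_\sigma\epsilon$. For $\epsilon\geq 1$ the inequality $\nu_\sigma(B(x,\epsilon))\leq 1$ is trivial, so assume $\epsilon<1$. I would then cover $J_\sigma\cap E$ (which carries the full mass of $\mu(\cdot\cap J_\sigma)$) by the finite antichain
$$
\Gamma(\sigma,\epsilon):=\{\tau=\sigma\ast\omega\in G^*:|\omega|\geq 1,\ c_\tau/c_\sigma\leq\epsilon<c_{\tau^-}/c_\sigma\},
$$
and reduce to
$$
\mu(B(y,r)\cap J_\sigma)\leq\sum_{\tau\in\Gamma(\sigma,\epsilon),\,J_\tau\cap B(y,r)\neq\emptyset}\mu(J_\tau).
$$

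The first ingredient is a packing bound on the number of stopping cylinders that meet $B(y,r)$. For any two distinct $\tau_1,\tau_2\in\Gamma(\sigma,\epsilon)$, let $\tau_0$ be their longest common prefix, so $\tau_i=\tau_0\ast\xi_i$ with $(\xi_1)_1\neq(\xi_2)_1$; applying (\ref{g4}) to the two sub-cylinders $J_{\tau_0\ast(\xi_1)_1}$ and $J_{\tau_0\ast(\xi_2)_1}$ yields $d(J_{\tau_1},J_{\tau_2})\geq t\max(|J_{\tau_1}|,|J_{\tau_2}|)$. Combined with the diameter bounds $\underline{c}\,r<|J_\tau|\leq r$ coming from the stopping condition, a standard volume-counting argument in $\mathbb{R}^q$ produces a constant $M_0=M_0(q,t,\underline{c})$ bounding the number of $\tau\in\Gamma(\sigma,\epsilon)$ intersecting $B(y,r)$.

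The second ingredient is a uniform upper bound on $\mu(J_\tau)/\mu(J_\sigma)$. Writing $\tau=\sigma\ast\omega$ with $m=|\omega|$, the Markov form (\ref{markovmeasure}) telescopes to
$$
\mu(J_\tau)/\mu(J_\sigma)=p_{\sigma_{|\sigma|}\omega_1}p_{\omega_1\omega_2}\cdots p_{\omega_{m-1}\omega_m}\leq\overline{p}^{\,m}.
$$
On the other hand, the stopping condition together with the lower bound on individual ratios gives $\underline{c}^{\,m}\leq c_\tau/c_\sigma\leq\epsilon$, so $m\geq\log\epsilon/\log\underline{c}$, and therefore $\mu(J_\tau)/\mu(J_\sigma)\leq\epsilon^{A_4}$ with $A_4:=\log\overline{p}/\log\underline{c}>0$. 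Positivity of $A_4$ is where assumption (\ref{cardpij>0}) enters, through $\overline{p}<1$.

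Combining the two ingredients yields $\nu_\sigma(B(x,\epsilon))\leq M_0\,\epsilon^{A_4}$ for $x\in K(\sigma)$, and the case of an arbitrary $x\in\mathbb{R}^q$ follows by re-centering the ball on a point of $B(x,\epsilon)\cap K(\sigma)$ (if empty, the measure is zero), which costs at most a factor $2^{A_4}$. Setting $A_3:=2^{A_4}\max(M_0,1)$ absorbs the trivial large-$\epsilon$ regime and gives the stated inequality. The only delicate step is the packing bound: one must upgrade the one-level separation (\ref{g4}) to the multi-level separation across the antichain $\Gamma(\sigma,\epsilon)$ and then obtain the volume estimate uniformly in $\sigma$ and $y$; everything else reduces to routine bookkeeping of the Markov structure.
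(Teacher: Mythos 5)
Your argument is correct and follows essentially the same route as the paper: a stopping-time antichain of cylinders at the scale of the ball, a bounded-count packing estimate, and the measure bound $\mu(J_\tau)/\mu(J_\sigma)\leq\overline{p}^{\,m}$ with $m\geq\log\epsilon/\log\underline{c}$, giving the same exponent $A_4=\log\overline{p}/\log\underline{c}$. The only cosmetic differences are that you obtain the packing count from the multi-level separation derived from (\ref{g4}) and handle arbitrary $x\in\mathbb{R}^q$ by re-centering, whereas the paper works in the pulled-back space using the inner-ball property of the $J_i$ together with Hutchinson's bounded-overlap lemma for the packing, and cites \cite[Lemma 12.3]{GL:00} to pass from $x\in K(\sigma)$ to all of $\mathbb{R}^q$.
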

\begin{proof}
Let $\sigma\in G^*\setminus\{\theta\},x\in K(\sigma)$. By (\ref{g4}), there exists a unique word $\tau_x\in G_\infty$ such that $\sigma\prec\tau_x$ and
$\bigcap_{k\geq 1}J_{\tau_x|_k}=\{x\}$. For every $\epsilon\in(0,\underline{c})$, we set
\begin{eqnarray}\label{z13}
\mathcal{C}(\sigma):=\{\tau\in G^*:\sigma\prec\tau,\;c_\sigma^{-1}c_{\tau^-}\geq\epsilon>c_\sigma^{-1}c_\tau\}.
\end{eqnarray}
For each $i\in G_1$, there exists some $t_i\in (0,1)$ such that $J_i$ contains a ball of radius $t_i|J_i|=t_i$ and is contained in a closed ball of radius $1$. Set $\delta:=\min_{1\leq i\leq N}t_i$. Then, for each $\tau\in\mathcal{C}(\sigma)$, $f_\sigma^{-1}(J_\tau)$ is contained in a ball of radius $\epsilon$ and contains a ball of radius $\delta\underline{c}\epsilon$. By (\ref{g4}), $J_\tau,\tau\in\mathcal{C}(\sigma)$ are pairwise disjoint, so are the sets $f_\sigma^{-1}(J_\tau),\tau\in\mathcal{C}(\sigma)$ by the similarity of $f_\sigma$. Thus, by \cite{Hut:81}, there is a constant $M$ which is independent of $\epsilon$ such that
\[
{\rm card}(\{\tau\in\mathcal{C}(\sigma):B(x,\epsilon)\cap f_\sigma^{-1}(J_\tau)\neq\emptyset\})\leq M.
\]
By (\ref{z13}), $\underline{c}^{|\tau|-|\sigma|}<\epsilon$ for $\tau\in\mathcal{C}(\sigma)$, which implies $|\tau|-|\sigma|\geq\log\epsilon/\log\underline{c}$. So,
\begin{eqnarray*}
\nu_\sigma(B(x,\epsilon))\leq M\overline{p}^{\frac{\log\epsilon}{\log\underline{c}}}=M\epsilon^{\frac{\log\overline{p}}{\log\underline{c}}}.
\end{eqnarray*}
Let $A_4:=\frac{\log\overline{p}}{\log\underline{c}}$. Then by \cite[Lemma 12.3]{GL:00}, there is a constant $A_3>0$, independent of $\sigma$, such that $\nu_\sigma(B(x,\epsilon))\leq A_3\epsilon^{A_4}$ for all $x\in\mathbb{R}^q$. Note that the above arguments holds true for any $\sigma\in G^*\setminus\{\theta\}$. The lemma follows.
\end{proof}

If the infimum in (\ref{quanerrordef}) is attained at some $\alpha$ with $1\leq{\rm card}(\alpha)\leq n$, we call $\alpha$ an $n$-optimal set for $\nu$ of order $r$. The collection of all $n$-optimal sets for $\nu$ of order $r$ is denoted by $C_{n,r}(\nu)$. We simply write $C_n(\nu)$ for $C_{n,0}(\nu)$.
Note that $\nu_\sigma,\sigma\in G^*$, are compactly supported. By Lemma \ref{s8} and \cite[Theorem 2.5]{GL:00}, we conclude that $C_n(\nu_\sigma)$ is non-empty for every $\sigma\in G^*\setminus\{\theta\}$ and $n\geq 1$. Using similar arguments, one can show that $C_n(\mu)$ is non-empty for every $n\geq 1$.

\begin{lemma}\label{g29}(see \cite{GL:04})
Let $\nu$ be a Borel probability measure on $\mathbb{R}^q$ with compact support $K$. Let $\hat{e}_n(\nu):=\log e_{n,0}(\nu)$. Assume that for some constants $d_1,d_2>0$ we have, $\sup_{x\in\mathbb{R}^q}\nu(B(x,\epsilon))\leq d_1\epsilon^{d_2}$. Then, we have
\[
\hat{e}_n(\nu)-\hat{e}_{n+1}(\nu)\leq(n+1)^{-1}\log(3|K|)+d_1^{1/q}qd_2^{-1}(n+1)^{-1/p},\;n\geq 1.
\]
where $p,q$ are real numbers satisfying $p,q>1,p^{-1}+q^{-1}=1$.
\end{lemma}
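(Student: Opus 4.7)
The plan is to compare $\hat{e}_n(\nu)$ and $\hat{e}_{n+1}(\nu)$ through a one-point-removal argument. Fix an $\alpha\in C_{n+1}(\nu)$; for each $a\in\alpha$, let $V_a$ denote its Voronoi cell with respect to $\alpha$ and set $\beta_a:=\alpha\setminus\{a\}$. Since $|\beta_a|=n$ one has $\hat{e}_n(\nu)\leq\int\log d(x,\beta_a)\,d\nu(x)$, and using $d(x,\beta_a)=d(x,\alpha)$ off $V_a$ together with $\hat{e}_{n+1}(\nu)=\int\log d(x,\alpha)\,d\nu(x)$, I obtain
\begin{equation*}
\hat{e}_n(\nu)-\hat{e}_{n+1}(\nu)\leq\int_{V_a}\log\frac{d(x,\beta_a)}{|x-a|}\,d\nu(x).
\end{equation*}
The left-hand side is independent of $a$, so averaging over all $a\in\alpha$ gives the key inequality
\begin{equation*}
(n+1)\bigl(\hat{e}_n(\nu)-\hat{e}_{n+1}(\nu)\bigr)\leq\int\log d(x,\beta_{a(x)})\,d\nu-\int\log d(x,\alpha)\,d\nu,
\end{equation*}
where $a(x)$ is the Voronoi projection of $x$ onto $\alpha$.

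For the first integral I would invoke the standard fact that an optimal set may be assumed to lie inside the closed convex hull of $K$, from which it follows that $d(x,\beta_{a(x)})\leq 3|K|$ uniformly on $K$; this yields the first summand $(n+1)^{-1}\log(3|K|)$. The analytic work goes into bounding $-\int\log d(x,\alpha)\,d\nu=\sum_a\int_{V_a}(-\log|x-a|)\,d\nu$. Passing to the positive part $(-\log|x-a|)_{+}$ (which only enlarges the integral) and using the layer-cake identity
\begin{equation*}
\int_{V_a}(-\log|x-a|)_{+}\,d\nu=\int_0^\infty\nu\bigl(V_a\cap B(a,e^{-t})\bigr)\,dt,
\end{equation*}
I would dominate the integrand simultaneously by $\nu(V_a)$ and, using the hypothesis $\nu(B(a,\epsilon))\leq d_1\epsilon^{d_2}$, by $d_1 e^{-td_2}$. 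The elementary inequality $\min(u,v)\leq u^{1/p}v^{1/q}$ (valid for $u,v\geq 0$ with $p^{-1}+q^{-1}=1$) followed by integration in $t$ yields
\begin{equation*}
\int_{V_a}(-\log|x-a|)_{+}\,d\nu\leq qd_1^{1/q}d_2^{-1}\,\nu(V_a)^{1/p}.
\end{equation*}
Summing in $a$ and applying discrete H\"older, $\sum_a\nu(V_a)^{1/p}\leq(n+1)^{1/q}\bigl(\sum_a\nu(V_a)\bigr)^{1/p}=(n+1)^{1/q}$; dividing through by $n+1$ then produces the second summand $qd_1^{1/q}d_2^{-1}(n+1)^{-1/p}$.

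The main obstacle is the tuning of exponents in the Young step above: one must split $\min(\nu(V_a),d_1e^{-td_2})$ so that the time-integral absorbs the factor $q/d_2$ and simultaneously the $\nu(V_a)$-exponent matches what discrete H\"older needs to aggregate the cell masses to $(n+1)^{1/q}$. Once this balance is found, the remaining pieces---the diameter bound on $d(x,\beta_{a(x)})$, the replacement of $-\log|x-a|$ by its positive part, and the location of optimal centers near $K$---are standard, which is why the estimate appears in this clean form.
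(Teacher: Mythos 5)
Your proof is correct and reconstructs the Graf--Luschgy ``firm estimate'' that the paper cites from \cite{GL:04}: one-point removal from an $(n{+}1)$-optimal set, a Voronoi decomposition, a layer-cake bound controlled by the density hypothesis together with Young's inequality $\min(u,v)\leq u^{1/p}v^{1/q}$, and then discrete H\"older to aggregate the cell masses; the exponent bookkeeping is exactly right and reproduces the constant $qd_1^{1/q}d_2^{-1}$ and the power $(n+1)^{-1/p}$. One step deserves a bit more care, namely the uniform bound on $d(x,\beta_{a(x)})$: you invoke the convex-hull reduction, but here $d$ comes from an \emph{arbitrary} norm, and the metric projection onto the closed convex hull of $K$ need not be $1$-Lipschitz outside the Euclidean case, so that reduction is not automatic for $r=0$. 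A norm-robust substitute is that any center of an optimal set whose Voronoi cell carries positive $\nu$-mass must lie within distance $|K|$ of $K$ (otherwise relocating it to any point of $K$ strictly decreases $\int_{V_a}\log d(x,\cdot)\,d\nu$ while not hurting the rest), after which two triangle inequalities give $d(x,\beta_{a(x)})\leq 3|K|$ for $x\in K$; this is presumably the origin of the stated constant $3|K|$ and closes the small gap in your sketch.
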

Let $\underline{q}:=\min_{i\in G_1} q_i$ and $\overline{q}:=\max_{i\in G_1} q_i$. As a consequence of (\ref{s8}) and Lemma \ref{g29}, for given integers $k_1,k_2,k_3\geq 1$, there exists an integer $A_5$ such that, for all $n\geq A_5$, we have
\begin{eqnarray}\label{z8}
\sup_{\sigma\in G^*}\big(\hat{e}_{n-k_1-k_3}(\nu_\sigma)-\hat{e}_{n+k_2}(\nu_\sigma)\big)<\underline{q}\overline{q}^{-1}\underline{p}\log 2.
\end{eqnarray}
\begin{remark}\label{z7} {\rm Using (\ref{s8}) and the proof of Theorem 3.4 of \cite{GL:04}, it is convenient to see, for every $k\geq 1$, there is a $B_k\in\mathbb{R}$ such that $\inf_{\sigma\in G^*}\hat{e}_k(\nu_\sigma)\geq B_k$.
}\end{remark}

\subsection{An estimate of the geometric mean error}
For $\epsilon>0$, let $(A)_\epsilon$ denote the closed $\epsilon$-neighborhood in $\mathbb{R}^q$ of a set $A\subset\mathbb{R}^q$. Let $t$ be the same as in (\ref{g4}). For a finite subset $\alpha$ of $\mathbb{R}^q$ and $\sigma\in G^*$, we write $\alpha_\sigma:=\alpha\cap(J_\sigma)_{4^{-1}tc_\sigma}$ and
$$
L_\sigma:={\rm card}(\alpha_\sigma),\;I_\sigma(\alpha):=\int_{J_\sigma} \log d(x,\alpha)d\mu(x).
$$
By (\ref{markovmeasure}), we have, $\mu(J_\sigma)=q_{\sigma_1}p_\sigma$ for every $\sigma\in G^*\setminus\{\theta\}$. We set
\[
J_\theta:=E;\;\;m_\sigma:=q_{\sigma_1}p_\sigma,\;\sigma\in G^*\setminus\{\theta\}.
\]
\begin{lemma}
There exists a constant $L_1$ which is independent of $j$ such that
\begin{eqnarray}\label{g3}
\sup_{\alpha\in C_{\psi_j}(\mu)}\max_{\sigma\in\Lambda_j}L_\sigma\leq L_1.
\end{eqnarray}
\end{lemma}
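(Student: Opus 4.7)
The plan is to argue by contradiction via a point-exchange between the cluster $\alpha_\sigma$ and some cylinder in $\Lambda_j$ containing no close points of $\alpha$. To begin, I would verify that the enlarged cylinders $(J_\tau)_{4^{-1}tc_\tau}$, $\tau \in \Lambda_j$, are pairwise disjoint. Distinct words in $\Lambda_j$ are mutually incomparable (if $\sigma \prec \tau$ with $\sigma, \tau \in \Lambda_j$, then $p_{\tau^-} \leq p_\sigma < \underline{p}^{\,j}$ contradicts $p_{\tau^-} \geq \underline{p}^{\,j}$); the separation condition (\ref{g4}) applied at their longest common prefix then gives $d(J_\tau, J_{\tau'}) \geq t \max(c_\tau, c_{\tau'}) > \tfrac{t}{4}(c_\tau + c_{\tau'})$, so the enlarged cylinders are pairwise disjoint. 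Consequently $\sum_{\tau \in \Lambda_j} L_\tau \leq \psi_j$, and if $L_\sigma = M$, the set $H := \{\omega \in \Lambda_j : L_\omega = 0\}$ satisfies $|H| \geq M - 1$.

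Suppose for contradiction that $L_\sigma = M$ can be made arbitrarily large. Fix absolute positive-integer constants $k$ and $C_0$ (to be determined), pick one $\omega \in H$, and define a competitor $\beta$ with $|\beta| = \psi_j$ by replacing $\alpha_\sigma$ with the $f_\sigma$-image of an optimal $(M-k)$-quantizer of $\nu_\sigma$ inside $(J_\sigma)_{4^{-1}tc_\sigma}$ and inserting into $J_\omega$ the $f_\omega$-image of an optimal $k$-quantizer of $\nu_\omega$. A key preliminary estimate is the refined lower bound
\[
I_\sigma(\alpha) \;\geq\; \mu(J_\sigma)\bigl(\log c_\sigma + \hat{e}_{M + C_0}(\nu_\sigma)\bigr),
\]
obtained from the pointwise inequality $d(f_\sigma(y), \alpha) \geq c_\sigma \min(d(y, f_\sigma^{-1}(\alpha_\sigma)), \tfrac{t}{4})$ on $K(\sigma)$ together with the observation that the truncated distance $\min(\cdot, \tfrac{t}{4})$ dominates the distance to $f_\sigma^{-1}(\alpha_\sigma) \cup W$ for any $\tfrac{t}{4}$-net $W$ of the unit-diameter set $K(\sigma)$, of cardinality $C_0 = C_0(q, t)$. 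Combined with the pushforward upper bound $I_\sigma(\beta) \leq \mu(J_\sigma)(\log c_\sigma + \hat{e}_{M-k}(\nu_\sigma))$ and (\ref{z8}) applied with $n = M$, shifts $k_1 + k_3 = k$ and $k_2 = C_0$, this yields
\[
I_\sigma(\beta) - I_\sigma(\alpha) \;\leq\; \mu(J_\sigma)\,\underline{q}\overline{q}^{-1}\underline{p}\log 2 \;\leq\; \underline{q}\,\underline{p}^{\,j+1}\log 2
\]
once $M \geq A_5 + k + C_0$.

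For $J_\omega$ one has $d(x, \alpha) \geq \tfrac{t}{4}c_\omega$ on $J_\omega$ (since $L_\omega = 0$), so $I_\omega(\alpha) \geq \mu(J_\omega)(\log c_\omega + \log\tfrac{t}{4})$, while choosing the inserted points as an optimal $k$-quantizer gives $I_\omega(\beta) \leq \mu(J_\omega)(\log c_\omega + \hat{e}_k(\nu_\omega))$; hence the gain is at least $\mu(J_\omega)(\log\tfrac{t}{4} - \hat{e}_k(\nu_\omega))$. Using (\ref{s8}) and the consequent uniform rate $e_{n,0}(\nu_\omega) \leq Cn^{-1/A_4}$, we have $\hat{e}_k(\nu_\omega) \to -\infty$ uniformly in $\omega$, so for $k \geq k_*$ (an absolute constant) the gain strictly exceeds $\mu(J_\omega)\log 2 \geq \underline{q}\,\underline{p}^{\,j+1}\log 2$, which in turn strictly dominates the loss on $J_\sigma$. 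For cylinders $\tau \ne \sigma,\omega$ the contributions are controlled by the sibling estimate—both $d(x,\alpha)$ and $d(x,\beta)$ agree on the bulk of $J_\tau$ where the nearest neighbor lies in the unchanged set $\alpha_\tau \subseteq \beta$, and a careful choice of the removed/inserted points keeps the residual non-positive. Summing yields $\int \log d(x,\beta)\,d\mu < \int \log d(x,\alpha)\,d\mu$, contradicting $\alpha \in C_{\psi_j}(\mu)$; taking $L_1 := A_5 + k_* + C_0 + 1$ finishes the proof.

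The main obstacle is the quantitative calibration required so that the gain on the single empty cylinder $J_\omega$ strictly dominates the loss on $J_\sigma$ together with any residual contribution. This needs both the covering-argument refinement of $I_\sigma(\alpha)$ to index $M + C_0$—so that (\ref{z8}) applies with a bounded shift—and the uniform quantization-rate bound for $\nu_\omega$ derived from (\ref{s8}) via Lemma \ref{g29}. The $\log 2$ factor in (\ref{z8}) is precisely matched to the uniform ratio $\mu(J_\sigma)/\mu(J_\omega) \leq \overline{q}/(\underline{q}\underline{p})$ of cylinder masses in $\Lambda_j$, converting the loss on $J_\sigma$ into at most $\mu(J_\omega)\log 2$ and thereby permitting the single-cylinder gain to prevail.
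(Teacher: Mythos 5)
Your proposal correctly identifies the contradiction strategy and the quantitative accounting on the ``donor'' cylinder $J_\sigma$ versus the ``receiver'' cylinder $J_\omega$: the refined lower bound $I_\sigma(\alpha)\geq\mu(J_\sigma)(\log c_\sigma+\hat{e}_{M+C_0}(\nu_\sigma))$ via a bounded net plays exactly the role of the paper's $\gamma_2(\sigma)$ (with $C_0=H_2$), the application of (\ref{z8}) with a bounded index shift bounds the loss, and the matching of the $\log 2$ factor with the uniform mass ratio $\overline{q}/(\underline{q}\,\underline{p})$ is the right computation. The uniform decay $\hat{e}_k(\nu_\omega)\to-\infty$ from (\ref{s8}) and Lemma \ref{g29} to control the gain is also correct in spirit.

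However, there is a genuine gap in the treatment of the remaining cylinders $J_\tau$, $\tau\in\Lambda_j\setminus\{\sigma,\omega\}$. Your competitor $\beta$ deletes $\alpha_\sigma$ and substitutes the $f_\sigma$-image of an optimal $(M-k)$-quantizer of $\nu_\sigma$. For a point $x\in J_\tau$ whose nearest element of $\alpha$ happened to lie in $\alpha_\sigma$, the quantity $d(x,\beta)$ can strictly increase after the substitution, since the replacement set is smaller and positioned only with regard to $\nu_\sigma$. You assert that ``a careful choice of the removed/inserted points keeps the residual non-positive,'' but the construction given does not achieve this, and in principle the aggregate residual over many affected $\tau$'s could dominate the single-cylinder gain on $J_\omega$. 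The paper closes this hole by also including in $\beta$ a fixed finite set $\gamma_1(\sigma)$ of $H_1$ centers of balls of radii $8^{-1}tc_\sigma$ covering the enlarged neighborhood $(J_\sigma)_{2^{-1}tc_\sigma}$. Because any segment from $x\in J_\tau$ to a point $a\in\alpha_\sigma\subset(J_\sigma)_{4^{-1}tc_\sigma}$ crosses the boundary of $(J_\sigma)_{2^{-1}tc_\sigma}$ at a point $y$ with $d(y,a)\geq 4^{-1}tc_\sigma\geq 8^{-1}tc_\sigma$, the triangle inequality gives $d(x,\gamma_1(\sigma))\leq d(x,y)+8^{-1}tc_\sigma\leq d(x,a)$, so $d(x,\beta)\leq d(x,\alpha)$ pointwise on every sibling cylinder, i.e.\ (\ref{z19}) holds with no residual to estimate. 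This device (or an equivalent one guaranteeing $d(\cdot,\beta)\leq d(\cdot,\alpha)$ on $\bigcup_{\tau\neq\sigma,\omega}J_\tau$) is what your argument is missing; without it the claimed strict improvement $I_\theta(\beta)<I_\theta(\alpha)$ does not follow, and the proof is not complete.
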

\begin{proof}
Since all $\nu_\sigma,\sigma\in G^*$, share the properties in (\ref{z8}) and (\ref{z7}),
it suffices to follow the induction in \cite[Proposition 3.4]{Zhu:10} by using (\ref{z17}).
\end{proof}

\begin{remark}\label{z20}{\rm
For the reader's convenience, let us explain the main idea of the induction in \cite{Zhu:10} by contradiction: suppose that (\ref{g3}) does not hold; we could choose a set $\beta$ with ${\rm card}(\beta)\leq{\rm card}(\alpha)$ which is "better" than $\alpha$.

Let $H_1$ be the smallest integer such that $(J_\sigma)_{2^{-1}tc_\sigma}$ can be
covered by $H_1$ closed balls of radii $8^{-1}tc_\sigma$ which are centered in $(J_\sigma)_{2^{-1}tc_\sigma}$, we denote by $\gamma_1(\sigma)$ the centers of such $H_1$ closed balls. Let $H_2$ be the smallest integer such that $(J_\sigma)_{4^{-1}tc_\sigma}$ can be covered by $H_2$ closed balls of radii $8^{-1}tc_\sigma$. Let us denote by $\gamma_2(\sigma)$ the centers of such $H_2$ closed balls.  Then by (\ref{z17}), we have
\begin{eqnarray}\label{z18}
I_\sigma(\alpha)\geq I_\sigma(\alpha\cup\gamma_2(\sigma))\geq m_\sigma(\log c_\sigma+\hat{e}_{L_\sigma+H_2}(\nu_\sigma)).
\end{eqnarray}
Let $H_3$ be the smallest integer such that $J_\tau$ can be covered by $H_3$ closed balls of radii $8^{-1}tc_\tau$ which are centered in $J_\tau$ and  we denote by $\gamma_3(\tau)$ the centers of such $H_3$ closed balls. Let $L_0$ be the smallest integer such that (\ref{s8}) holds with $k_i=H_i,i=1,2,3$. Set $L_1:=L_0+H_1+H_3$.

Suppose that $L_\sigma={\rm card}(\alpha_\sigma)>L_1$. By (\ref{g4}), $\alpha_\sigma\cap\alpha_\omega=\emptyset$ for distinct words $\sigma,\omega\in\Lambda_j$. So, there is a $\tau\in\Lambda_j$ with $L_\tau=0$. Let $\gamma_4(\sigma)\in C_{L_\sigma-H_1-H_3}(\nu_\sigma)$. Set
\[
\beta:=(\alpha\setminus\alpha_\sigma)\cup\gamma_1(\sigma)\cup\gamma_3(\tau)\cup\gamma_4(\sigma).
\]
Then ${\rm card}(\alpha)\geq{\rm card}(\beta)$. The set $\gamma_1(\sigma)$ ensures that $J_\omega,\omega\in\Lambda_j\setminus\{\sigma,\tau\}$, are not affected unfavorably while we try to adjust the "optimal points" between $\alpha_\sigma$ and $\alpha_\tau$. In fact, by triangle inequality, we have, $d(x,\alpha_\sigma)\geq d(x,\gamma_1(\sigma))$ for $x\in J_\omega,\omega\in\Lambda_j\setminus\{\sigma,\tau\}$. It follows that
\begin{eqnarray}\label{z19}
I_\omega(\alpha)>I_\omega(\beta)\;\;{\rm for\;\;all}\;\;\omega\in\Lambda_j\setminus\{\sigma,\tau\}.
\end{eqnarray}
Thus, it suffices to estimate the following differences separately:
\begin{eqnarray*}
\Delta_1:=I_\sigma(\beta)-I_\sigma(\alpha);\;\;\Delta_2:=I_\tau(\alpha)-I_\tau(\beta).
\end{eqnarray*}
Using (\ref{z17}), (\ref{z8}) and (\ref{z18}), it is easy to show that $\Delta_1<\Delta_2$. This, together with (\ref{z19}),
implies that $I_\theta(\alpha)>I_\theta(\beta)$, contradicting the optimality of $\alpha$.
}\end{remark}

\begin{lemma}\label{characterization} There exists a constant $C_0$ such that for all large $j\in\mathbb{N}$,
\begin{eqnarray*}
\sum_{\tau\in\Lambda_j}m_\tau\log c_\tau+C_0\leq\hat{e}_{\psi_j}(\mu)\leq\sum_{\tau\in\Lambda_j}m_\tau\log c_\tau.
\end{eqnarray*}
\end{lemma}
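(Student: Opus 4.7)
The plan is to prove the two bounds separately: the upper bound by an explicit construction, and the lower bound by a pull-back estimate applied to an optimal codebook.

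For the upper bound, I use that $\Lambda_j$ is an antichain in $G^*$ that exhausts $G_\infty$—for any $\sigma \in G_\infty$, $p_{\sigma|_k}$ is non-increasing in $k$ and tends to zero by $\max_{(i,j)\in G_2} p_{ij} < 1$, so a unique $k$ with $\sigma|_k \in \Lambda_j$ exists. Hence $\{J_\tau : \tau \in \Lambda_j\}$ covers $E$ up to a $\mu$-null set and $\sum_{\tau \in \Lambda_j} m_\tau = 1$. Picking any $a_\tau \in J_\tau$ for each $\tau$ and setting $\alpha_0 := \{a_\tau : \tau \in \Lambda_j\}$ yields a codebook of cardinality $\psi_j$ with $d(x, \alpha_0) \leq |J_\tau| = c_\tau$ on $J_\tau$. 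Integrating and summing gives the upper bound directly.

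For the lower bound, fix $\alpha \in C_{\psi_j}(\mu)$ and aim for $I_\sigma(\alpha) \geq m_\sigma(\log c_\sigma + C_0)$ uniformly in $\sigma \in \Lambda_j$ for some absolute $C_0$; summation then finishes. The key localisation is that any point $a \in \alpha \setminus \alpha_\sigma$ satisfies $d(a, J_\sigma) \geq 4^{-1}tc_\sigma$ by the definition of $\alpha_\sigma$, whence for every $x \in J_\sigma$ (with the convention $d(x, \emptyset) := +\infty$)
\[
d(x, \alpha) \geq \min\bigl(d(x, \alpha_\sigma),\, 4^{-1}tc_\sigma\bigr).
\]
Pulling back via $f_\sigma^{-1}$ through (\ref{z17}), with $y := f_\sigma^{-1}(x)$ and $\beta_\sigma := f_\sigma^{-1}(\alpha_\sigma)$, this becomes $d(x, \alpha) \geq c_\sigma\min\bigl(d(y, \beta_\sigma),\, 4^{-1}t\bigr)$.

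To integrate the logarithm of this against $\nu_\sigma$ uniformly in $\sigma$, I exploit that $\nu_\sigma$ is supported on $K(\sigma)$, which has diameter $1$ under the normalization $|J_i|=1$, and $\beta_\sigma \subset f_\sigma^{-1}\bigl((J_\sigma)_{4^{-1}tc_\sigma}\bigr)$ lies in a fixed bounded neighbourhood; consequently $d(y, \beta_\sigma)$ is bounded above by an absolute constant on $\operatorname{supp}\nu_\sigma$, and a simple case split on whether $d(y,\beta_\sigma)$ is larger or smaller than $4^{-1}t$ furnishes an absolute $C_1$ with $\log\min(d(y,\beta_\sigma),4^{-1}t) \geq \log d(y,\beta_\sigma) - C_1$. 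Integrating against $\nu_\sigma$, invoking $\int \log d(y,\beta_\sigma)\,d\nu_\sigma \geq \hat e_{L_\sigma}(\nu_\sigma)$, and then the uniform cardinality bound $L_\sigma \leq L_1$ from (\ref{g3}), monotonicity of $\hat e_n$ in $n$, and the uniform lower bound $\hat e_{L_1}(\nu_\sigma) \geq B_{L_1}$ from Remark \ref{z7}, yields $I_\sigma(\alpha) \geq m_\sigma(\log c_\sigma + B_{L_1} - C_1)$ whenever $\alpha_\sigma \neq \emptyset$; the case $\alpha_\sigma = \emptyset$ is immediate from $d(x,\alpha) \geq 4^{-1}tc_\sigma$. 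Taking $C_0 := \min(B_{L_1} - C_1,\, \log(4^{-1}t))$ and summing over $\Lambda_j$ completes the proof. The one genuinely delicate step is the truncation: it lets us replace $d(x,\alpha)$—depending on all $\psi_j$ points of $\alpha$—by a quantity involving only $\alpha_\sigma$, whose cardinality is uniformly bounded by $L_1$, at the cost of a single additive constant, which is exactly what the statement permits.
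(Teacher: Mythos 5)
Your proof is correct and essentially matches the paper's: the upper bound is obtained exactly as you describe by placing one codepoint in each $J_\tau$, $\tau\in\Lambda_j$, and the lower bound proceeds by localizing an optimal $\alpha$ to $\alpha_\sigma$, invoking the uniform cardinality bound $L_\sigma\le L_1$ from (\ref{g3}), and using the uniform lower bound $\hat e_k(\nu_\sigma)\ge B_k$ of Remark~\ref{z7}. The only cosmetic difference is how the localization is paid for: you truncate $d(x,\alpha)$ at $4^{-1}tc_\sigma$, pull back, and absorb the truncation via a case split using the bounded diameter of $K(\sigma)$ (handling $\alpha_\sigma=\emptyset$ separately), whereas the paper adjoins the covering set $\gamma_3(\tau)$ of $H_3$ points so that $d(x,\alpha)\ge d(x,\alpha_\tau\cup\gamma_3(\tau))$ holds directly and (\ref{g6}) applies with cardinality at most $L_1+H_3$, yielding $C_0=B_{L_1+H_3}$.
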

\begin{proof}
Let $\alpha\in C_{\psi_j}(\mu)$ and let $\gamma_3(\tau)$ be as defined in Remark \ref{z20}. By (\ref{g3}),
${\rm card}(\alpha_\tau\cup\gamma_3(\tau))\leq L_1+H_3$ for every $\tau\in\Lambda_j$.
One can see that,
$$
d(x,\alpha)\geq d(x,\alpha_\tau\cup\gamma_3(\tau))\;\; {\rm for\; all}\;\;x\in J_\tau.
$$
Set $C_0:=B_{L_1+H_3}$. By (\ref{g6}) and Remark \ref{z7},
\begin{eqnarray*}
\hat{e}_{\psi_j}(\mu)\geq\sum_{\tau\in\Lambda_j}I_\tau(\alpha_\tau\cup\gamma_3(\tau))\geq\sum_{\tau\in\Lambda_j}m_\tau\log c_\tau+C_0.
\end{eqnarray*}
For each $\tau\in\Lambda_j$, let $b_\tau$ be an arbitrary point in $J_\tau$ and set $\gamma:=\{b_\tau\}_{\tau\in\Lambda_j}$. Then we have,
$\hat{e}_{\psi_j}(\mu)\leq I_\theta(\gamma)\leq\sum_{\tau\in\Lambda_j}m_\tau\log c_\tau$. The lemma follows.
\end{proof}

For $i\in G_1$, let $\mu_i$ denote the conditional probability measure $\mu(\cdot|J_i)$, namely, for every Borel set $B\subset\mathbb{R}^q$,
$\mu_i(B)=\mu(B\cap J_i)/\mu(J_i)$. We define
\begin{eqnarray*}
&&G_k(i):=\{\sigma\in G_k:\sigma_1=i\},\;\;k\geq 1;\;\;G^*(i):=\bigcup_{k\geq 1}G_k(i);\\
&&\Lambda_k(i):=\{\sigma\in G^*(i):p_{\sigma^-}\geq\underline{p}^k>p_\sigma\},\;\;\psi_k(i):={\rm card}(\Lambda_k(i)),\;\;k\geq 1.
\end{eqnarray*}

For $\sigma\in G^*(i)$, we have, $\mu_i(J_\sigma)=p_\sigma$. As we did for $\mu$, one can show that, there exists a constant $C_0(i)$ such that for all large $k\in\mathbb{N}$,
\begin{eqnarray}\label{mui}
\sum_{\sigma\in\Lambda_k(i)}p_\sigma\log c_\sigma+C_0(i)\leq\hat{e}_{\psi_k(i)}(\mu)\leq\sum_{\sigma\in\Lambda_k(i)}p_\sigma\log c_\sigma.
\end{eqnarray}

\section{Proof of Theorem \ref{mthm1}}
For the proof of Theorem \ref{mthm1}, we need to establish several lemmas. For $k,n\geq 1$, and $\sigma\in G_k$, let $\Gamma(\sigma,n):=\{\omega\in G_{k+n}:\sigma\prec\omega\}$; we define
\begin{eqnarray*}\label{s21}
&&\xi(i,n):=\sum_{\tau\in\Gamma(i,n)}p_\tau\log p_\tau;\;\;\lambda(i,n):=\sum_{\tau\in\Gamma(i,n)}p_\tau\log c_\tau,\;i\in G_1;\\
&&\Delta_n(i,j):=|\xi(i,n)-\xi(j,n)|,\;\;\widetilde{\Delta}_n(i,j):=|\lambda(i,n)-\lambda(j,n)|,\;\;i,j\in G_1.
\end{eqnarray*}

For $1\leq i,j\leq N$ with $(i,j)\notin G_2$, we have, $p_{ij}=c_{ij}=0$. In the following, we take the convention that $0\cdot\log 0:=0$, so that we may take the sums from $i=1$ to $N$,
instead of considering words in $G^*$. We always denote by $v=(v_i)_{i=1}^N$ the normalized positive left (row) eigenvector of $P$ with respect to the Perron-Frobenius eigenvalue $1$, when $P$ is irreducible. Let $u_0$ and $l_0$ denote the numerator and denominator in the definition of $s_0$ (see (\ref{s0})).

To study the asymptotics of the geometric mean errors, we will naturally need the following estimate which reflects some hereditary information of $\mu$. One may see \cite[(2.11)]{Zhu:13} for a comparison.

\begin{lemma}\label{lem0}
Assume that $P$ is irreducible. There exists a $C_1>0$ such that
\begin{eqnarray}\label{s1}
\sup_{n\geq 1}\max_{i,j\in G_1}\max\big\{\Delta_n(i,j),\widetilde{\Delta}_n(i,j)\big\}\leq C_1.
\end{eqnarray}
\end{lemma}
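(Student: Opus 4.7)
The plan is to rewrite $\xi(i,n)$ and $\lambda(i,n)$ in terms of iterates of $P$ applied to a fixed vector, and then deduce (\ref{s1}) from the spectral structure of $P$ on the hyperplane $\{u:v\cdot u=0\}$. This exploits only the irreducibility of $P$, not aperiodicity.

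\emph{Expansion step.} A typical $\tau=(i,j_1,\dots,j_n)\in\Gamma(i,n)$ has $\log p_\tau=\sum_{k=1}^{n}\log p_{j_{k-1}j_k}$ with $j_0:=i$, and the marginal distribution of the pair $(j_{k-1},j_k)$ under the probability $\tau\mapsto p_\tau$ on $\Gamma(i,n)$ is $(P^{k-1})_{ia}\,p_{ab}$. With the convention $0\cdot\log 0=0$ and with
$h_i:=\sum_{a\in G_1}p_{ia}\log p_{ia}$, $g_i:=\sum_{a\in G_1}p_{ia}\log c_{ia}$,
a direct summation gives
\[
\xi(i,n)=\sum_{k=0}^{n-1}(P^k h)_i,\qquad \lambda(i,n)=\sum_{k=0}^{n-1}(P^k g)_i.
\]

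\emph{Reduction to a mean-zero vector.} Because $P\mathbf{1}=\mathbf{1}$, the difference $(P^k h)_i-(P^k h)_j$ is unaffected by replacing $h$ with $\tilde h:=h-(v\cdot h)\mathbf{1}$, and $v\cdot\tilde h=0$. The subspace $V_0:=\{u\in\mathbb{R}^N:v\cdot u=0\}$ is $P$-invariant since $vP=v$. Moreover $(I-P)|_{V_0}$ is invertible: if $u\in V_0$ and $Pu=u$, then irreducibility and the Perron--Frobenius theorem force $u$ to be a scalar multiple of $\mathbf{1}$, and $v\cdot\mathbf{1}=1\neq0$ then forces the scalar to be $0$. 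Set $S:=\bigl((I-P)|_{V_0}\bigr)^{-1}$.

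\emph{Uniform control of the partial sums.} Put $u_n:=\sum_{k=0}^{n-1}P^k\tilde h$, which lies in $V_0$ because $v\cdot(P^k\tilde h)=v\cdot\tilde h=0$ for every $k$. A telescoping identity gives
\[
(I-P)\,u_n=(I-P^n)\tilde h,\qquad\text{hence}\qquad u_n=S(I-P^n)\tilde h.
\]
Row-stochasticity of $P$ implies $\|P^n\|_\infty\leq 1$ for every $n$, so $\|u_n\|_\infty\leq 2\|S\|_\infty\|\tilde h\|_\infty$ uniformly in $n$. Since the constant component $(v\cdot h)\mathbf{1}$ cancels out of every row difference, $\xi(i,n)-\xi(j,n)=(u_n)_i-(u_n)_j$, and therefore $\Delta_n(i,j)\leq 4\|S\|_\infty\|\tilde h\|_\infty$. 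Applying the identical argument to $g$ in place of $h$ produces a matching bound on $\widetilde\Delta_n(i,j)$. Choosing $C_1$ to dominate both bounds yields (\ref{s1}).

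The main obstacle is that $P$ is allowed to be periodic, so one cannot simply invoke exponential convergence $P^k\to\mathbf{1}v$: unit-modulus eigenvalues of $P$ other than $1$ are possible, $P^k\tilde h$ need not tend to zero, and $\sum_kP^k\tilde h$ need not converge absolutely. The saving idea is to bound the partial sum $u_n$ directly rather than its terms, using the invertibility of $I-P$ on $V_0$ together with the uniform contraction $\|P^n\|_\infty\leq 1$; both of these properties follow already from irreducibility.
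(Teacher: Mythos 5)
Your argument is correct. You and the paper arrive at the same representation $\xi(i,n)=\sum_{k=0}^{n-1}(P^k h)_i$ with $h_i=\sum_a p_{ia}\log p_{ia}$ (the paper derives it by an inductive recursion on word length, you by reading off the one-step marginals of the Markov chain), and both reduce the lemma to a uniform-in-$n$ bound on the centered partial sums $\sum_{k}\bigl((P^k)_{il}-v_l\bigr)$. The difference is in how that bound is obtained. The paper isolates a main term $w_{k,i}=\xi(i,1)+(k-2)u_0$ and invokes the Cesàro-averaging theorem of Horn and Johnson to control the remainder $\sum_h(b_{il}^{(h-2)}-v_l)$. You prove the same bound directly: after subtracting the constant component $(v\cdot h)\mathbf{1}$, the partial sums $u_n$ lie in the $P$-invariant hyperplane $V_0=\{u:v\cdot u=0\}$, on which $I-P$ is invertible because irreducibility makes the Perron eigenvalue $1$ geometrically simple; the telescoping identity $u_n=\bigl((I-P)|_{V_0}\bigr)^{-1}(I-P^n)\tilde h$ together with $\|P^n\|_\infty\le 1$ then gives the uniform bound. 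This is more self-contained than the paper's citation, and it makes transparent why irreducibility alone (without aperiodicity) suffices: you bound the partial sums without ever needing $P^k\to\mathbf{1}v$. Both routes exploit the same spectral facts about irreducible stochastic matrices, but your resolvent-on-the-invariant-hyperplane argument is arguably the cleaner one to read.
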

\begin{proof}
For $h\geq 1$ and $l,p\in G_1$, let $b_{lp}^{(h)}$ denote the $(l,p)$-entry of $P^h$.
For $h\geq 3$ and $l\in G_1$, we have, $\sum_{\tau\in G_{h-2}(i)}p_{\tau\ast l}=b_{il}^{(h-2)}$ (cf. \cite[(30)]{MW:88}). In addition, we have, $G_h(i)=\Gamma(i,h-1)$. One can see
\begin{eqnarray*}
\xi(i,h-1)&=&\sum_{\omega\in G_{h-1}(i)}\sum_{j=1}^Np_\omega p_{\omega_{h-1}j}\log p_\omega+
\sum_{\tau\in G_{h-2}(i)}\sum_{l=1}^N\sum_{j=1}^Np_{\tau\ast l}p_{lj}\log p_{lj}\\&=&\xi(i,h-2)+
\sum_{l=1}^N\sum_{j=1}^N b_{il}^{(h-2)}p_{lj}\log p_{lj}.
\end{eqnarray*}
Write $d_h(i):=\sum_{l=1}^N\sum_{j=1}^N b_{il}^{(h-2)}p_{lj}\log p_{lj},h\geq 3$. By induction, we have
\[
\xi(i,k-1)=\xi(i,1)+\sum_{h=3}^kd_h(i),\;\;k\geq 3.
\]
Set $w_{k,i}:=\xi(i,1)+(k-2)u_0$. Then we have
\begin{eqnarray}
\xi(i,k-1)=w_{k,i}+
\sum_{l=1}^N\sum_{j=1}^Np_{lj}\log p_{lj}\sum_{h=3}^k(b_{il}^{(h-2)}-v_l).\label{g20}
\end{eqnarray}
Similarly, set $z_{k,i}:=\lambda(i,1)+(k-2)l_0$. Then, for $k\geq 3$, we have
\begin{eqnarray}
\lambda(i,k-1)=z_{k,i}+\sum_{l=1}^N\sum_{j=1}^Np_{lj}\log c_{lj}\sum_{h=3}^k(b_{il}^{(h-2)}-v_l).
\end{eqnarray}
Let $u=(\chi_i)_{i=1}^N$ be the column vector with $\chi_i=1$ for all $1\leq i\leq N$. Then $u$ is a right eigenvector of $P$ with respect to $1$ and $\sum_{i=1}^N\chi_iv_i=1$. We have
\[
L:=uv=:(l_{ij})_{N\times N},\;\;l_{ij}=v_j,\;1\leq i,j\leq N.
\]
Applying \cite[Theorem 8.6.1]{HJ:85} with the above matrix $L$, there exists a constant $C(P)$ such that
for all $k\geq 3$,
\begin{equation*}
\frac{1}{k-2}\big|\sum_{h=3}^k(b_{pl}^{(h-2)}-v_l)\big|=\big|\frac{1}{k-2}\sum_{h=3}^kb_{pl}^{(h-2)}-v_l\big|<\frac{C(P)}{k-2},\;\;p,l\in G_1.
\end{equation*}
This, together with (\ref{g20}), yields
\begin{eqnarray}\label{suc1}
\frac{1}{k-2}|\xi(i,k-1)-w_{k,i}|\leq\frac{C(P)}{k-2}\sum_{l=1}^N\sum_{j=1}^N|p_{lj}\log p_{lj}|=:\frac{\delta_0}{k-2};\;k\geq 3.
\end{eqnarray}
Hence, $|\xi(i,k-1)-w_{k,i}|\leq\delta_0$ for all $k\geq 3$. Note that, the above argument is true for all $i\in G_1$. Set
$\delta_1:=\max_{i,j\in G_1}|\xi(i,1)-\xi(j,1)|$. Then for $n=k-1$,
\begin{equation*}
\Delta_n(i,j)\leq|\xi(i,n)-w_{k,i}|+|w_{k,i}-w_{k,j}|+|\xi(j,n)-w_{k,j}|\leq 2\delta_0+\delta_1=:\delta_2.
\end{equation*}
Analogously, for some constant $\delta_3>0$, we have, $\widetilde{\Delta}_n(i,j)\leq\delta_3$ for $i,j\in G_1$ and $h\geq 1$.
Thus, the lemma follows by setting $C_1:=\max\{\delta_2,\delta_3\}$.
\end{proof}

The following two number sequences $(t_k)_{k=1}^\infty$ and $(s_k)_{k=1}^\infty$ are closely connected with the asymptotic geometric mean errors:
\begin{eqnarray}\label{g10}
t_k:=\frac{\sum_{\sigma\in\Lambda_k}m_\sigma\log m_\sigma}{\sum_{\sigma\in\Lambda_k}m_\sigma\log c_\sigma};\;\;
s_k:=\frac{\sum_{\sigma\in G_k}m_\sigma\log m_\sigma}{\sum_{\sigma\in G_k}m_\sigma\log c_\sigma},\;k\geq 1.
\end{eqnarray}
Let $u_k,l_k$ denote the numerator and denominator in the definition of $s_k$. Then
\begin{eqnarray}\label{z15}
u_1=\sum_{i=1}^Nq_i\log q_i,\;\;u_2=\sum_{i=1}^N\sum_{j=1}^Nq_ip_{ij}\log(q_ip_{ij})=u_1+\sum_{i=1}^Nq_i\xi(i,1).
\end{eqnarray}
\begin{lemma}\label{g16}
Assume that $P$ is irreducible. There exists a constant $C_2$ such that for all large $k\in\mathbb{N}$, we have
$|s_k-s_0|\leq C_2 k^{-1}$.
\end{lemma}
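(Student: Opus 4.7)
The plan is to show that both the numerator $u_k$ and the denominator $l_k$ in the definition of $s_k$ differ from their ``leading order'' counterparts $(k-1)u_0$ and $(k-1)l_0$ by an additive error that is bounded uniformly in $k$. A straightforward algebraic manipulation then converts this additive control into the desired $O(1/k)$ estimate for the ratio $s_k-s_0$.

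The first step is a direct expansion. Writing $\log m_\sigma=\log q_{\sigma_1}+\sum_{h=1}^{k-1}\log p_{\sigma_h\sigma_{h+1}}$ and $\log c_\sigma=\sum_{h=1}^{k-1}\log c_{\sigma_h\sigma_{h+1}}$, one carries out the sums $\sum_{\sigma\in G_k}m_\sigma(\cdot)$ one variable at a time. Row-stochasticity of $P$ collapses every index outside the pair $(\sigma_h,\sigma_{h+1})$, and $\sum_i q_i=1$ collapses $\sigma_1$. Setting $a_l:=\sum_{j=1}^Np_{lj}\log p_{lj}$ and $\tilde a_l:=\sum_{j=1}^Np_{lj}\log c_{lj}$, the outcome is
\begin{equation*}
u_k=\sum_{i=1}^Nq_i\log q_i+\sum_{h=0}^{k-2}\sum_{l=1}^N(qP^h)_l\,a_l,
\qquad
l_k=\sum_{h=0}^{k-2}\sum_{l=1}^N(qP^h)_l\,\tilde a_l.
\end{equation*}
At this point one invokes exactly the Cesaro estimate already used in the proof of Lemma \ref{lem0}: applying \cite[Theorem 8.6.1]{HJ:85} to $P$ gives $|\sum_{h=1}^{k-2}b_{il}^{(h)}-(k-2)v_l|\le C(P)$ for every $i,l\in G_1$. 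Summing against $q_i$ (and absorbing the $h=0$ term) yields $|\sum_{h=0}^{k-2}(qP^h)_l-(k-1)v_l|\le C'$ with $C'$ independent of $k$ and $l$. Substituting this into the displays above, the definitions of $u_0$ and $l_0$ give
\begin{equation*}
u_k=(k-1)u_0+R_k,\qquad l_k=(k-1)l_0+\widetilde R_k,
\end{equation*}
with $|R_k|,|\widetilde R_k|$ bounded by a constant depending only on $P$, $(q_i)$, and the $p_{ij},c_{ij}$.

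The final step is the identity
\begin{equation*}
s_k-s_0=\frac{u_kl_0-u_0l_k}{l_kl_0}=\frac{l_0R_k-u_0\widetilde R_k}{l_kl_0}.
\end{equation*}
Since $c_{ij}\in(0,1)$ and $v_i,p_{ij}>0$, we have $l_0<0$, so the denominator is comparable to $(k-1)l_0^2$ for all large $k$ (and in particular bounded away from zero), while the numerator is $O(1)$; hence $|s_k-s_0|\le C_2 k^{-1}$ for some constant $C_2$ and all $k$ sufficiently large. The only mildly delicate point is the bookkeeping in the first step — checking that each marginalization really collapses to the factor $(qP^h)_l$ — and the harmless verification that $l_k$ stays away from zero, which is immediate from $l_0\neq 0$. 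No ingredient beyond Lemma \ref{lem0} is needed.
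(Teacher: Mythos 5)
Your proof is correct and follows essentially the same route as the paper's: both reduce the claim to the Ces\`aro-type bound from Horn--Johnson (Theorem 8.6.1), used to write $u_k$ and $l_k$ as affine functions of $k$ plus a remainder that is bounded uniformly in $k$, and then finish with the same elementary algebra on $s_k-s_0$. The paper packages the marginalization through the quantities $\xi(i,k-1)$, $w_{k,i}$, $z_{k,i}$ and the recursion set up inside the proof of Lemma~\ref{lem0}, whereas you marginalize directly over $G_k$ to get the $\sum_{h}(qP^h)_l$ form; this is the same computation, merely expressed more compactly.
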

\begin{proof}
Let $w_{k,i},z_{k,i}$ be as defined in the proof of Lemma \ref{lem0}. We write
\begin{eqnarray*}
x_k:=\sum_{i=1}^Nq_i(\xi(i,k-1)-w_{k,i}),\;y_k:=\sum_{i=1}^Nq_i(\lambda(i,k-1)-z_{k,i}),\;k\geq 3.
\end{eqnarray*}
For $k\geq 3$, by (\ref{g20}), (\ref{z15}) and the definitions of $u_k$ and $\xi(i,k-1)$, we deduce
\begin{eqnarray}
u_k&=&\sum_{i=1}^N\sum_{\omega\in G_k(i)}q_i p_\omega\log q_i+\sum_{i=1}^N\sum_{\omega\in G_k(i)}q_i p_\omega\log p_\omega\nonumber\\
&=&u_1+\sum_{i=1}^Nq_i\xi(i,k-1)\nonumber\\
&=&u_1+\sum_{i=1}^Nq_i\big(\xi(i,1)+(k-2)u_0+\xi(i,k-1)-w_{k,i}\big)\nonumber\\
&=&u_2+(k-2)u_0+x_k.\label{s23}
\end{eqnarray}
Note that $l_1=0$. By replacing $\log p_{ij}$ in (\ref{s23}) with $\log c_{ij}$, we have
\begin{eqnarray}\label{s25}
l_k=l_2+(k-2)l_0+y_k.
\end{eqnarray}
By (\ref{suc1}), we have, $|\xi(i,k-1)-w_{k,i}|\leq\delta_0$ for all $i\in G_1$. Thus,
\begin{eqnarray}\label{s26}
|x_k|\leq\sum_{i=1}^Nq_i|\xi(i,k-1)-w_{k,i}|\leq\delta_0 \;\;{\rm and}\;\;|y_k|\leq\delta_3.
\end{eqnarray}
Set $s_2:=u_2+l_2$ and $A_6:=|s_2(l_0+u_0)|$. by (\ref{s23})-(\ref{s26}), for large $k$, we deduce
\begin{eqnarray*}
|s_k-s_0|&=&\bigg|\frac{u_2+(k-2)u_0+x_k}
{l_2+(k-2)l_0+y_k}-\frac{u_0}{l_0}\bigg|\\&\leq&
\frac{A_6+|x_kl_0-y_ku_0|}{|l_0(l_2+(k-2)l_0+y_k)|}\\
&\leq&\frac{2A_6+2(\delta_0+\delta_3)(|u_0|+|l_0|)}{(k-2)l_0^2}\\&\leq&\frac{4A_6+4(\delta_0+\delta_3)(|u_0|+|l_0|)}{kl_0^2}=:C_2k^{-1}.
\end{eqnarray*}
This completes the proof of the lemma.
\end{proof}

\begin{remark}{\rm
If $(q_i)_{i=1}^N$ agrees with $v$, then $\sum_{i=1}^Nq_i b_{il}^{(h-2)}=v_l$ since $v P^k=v$ for all $k\geq 1$. Hence, $x_k=0$ and (\ref{s23}) becomes:
$u_k=u_1+(k-2)u_0$. This was calculated in \cite[Theorem 4.27]{PW:82}.
}\end{remark}

Next we establish a connection between $(t_j)_{j=1}^\infty$ and $(s_k)_{k=1}^\infty$. We have
\begin{lemma}\label{lem1}
Assume that $P$ is irreducible. There exist a constant $C_3$ and two integers $k_j^{(i)}\in[k_{1j},k_{2j}],i=1,2$ such that
\begin{equation*}
s_{k_j^{(1)}}-C_3j^{-1}\leq t_j\leq s_{k_j^{(2)}}+C_3j^{-1}.
\end{equation*}
\end{lemma}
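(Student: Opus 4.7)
The plan is to establish $|t_j - s_0| = O(1/j)$ and then, via Lemma~\ref{g16}, deduce $|t_j - s_k| = O(1/j)$ for every $k \in [k_{1j}, k_{2j}]$. The conclusion of Lemma~\ref{lem1} then follows with any choice such as $k_j^{(1)} = k_j^{(2)} = k_{1j}$.

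The key step is a partition identity. For $k \geq k_{2j}$, every $\tau \in G_k$ has a unique ancestor $\sigma \in \Lambda_j$, so $G_k = \bigsqcup_{\sigma \in \Lambda_j} \{\tau \in G_k : \sigma \prec \tau\}$. Writing $\tau = \sigma \ast \rho$ and setting $\rho' := (\sigma_{|\sigma|}, \rho_1, \ldots, \rho_{|\rho|}) \in \Gamma(\sigma_{|\sigma|}, k - |\sigma|)$, one has $m_\tau = m_\sigma p_{\rho'}$ and $c_\tau = c_\sigma c_{\rho'}$, yielding
\begin{align*}
u_k &= U_j + \sum_{\sigma \in \Lambda_j} m_\sigma \, \xi(\sigma_{|\sigma|}, k - |\sigma|), \\
l_k &= L_j + \sum_{\sigma \in \Lambda_j} m_\sigma \, \lambda(\sigma_{|\sigma|}, k - |\sigma|),
\end{align*}
where $U_j := \sum_{\sigma\in\Lambda_j}m_\sigma \log m_\sigma$ and $L_j := \sum_{\sigma\in\Lambda_j}m_\sigma\log c_\sigma$ are the numerator and denominator of $t_j$. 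The estimate (\ref{suc1}) in the proof of Lemma~\ref{lem0}, together with its $\lambda$-analogue and the boundedness of $\xi(i,1),\lambda(i,1)$, gives the uniform expansions $\xi(i, n) = (n-1) u_0 + O(1)$ and $\lambda(i, n) = (n-1) l_0 + O(1)$. Introducing $\bar{k}_j := \sum_{\sigma \in \Lambda_j} m_\sigma |\sigma| \in [k_{1j}, k_{2j}]$ and using $\sum_{\sigma\in\Lambda_j}m_\sigma=1$, this reduces the two displays to
\begin{equation*}
u_k = U_j + u_0 (k - 1 - \bar{k}_j) + O(1), \quad l_k = L_j + l_0 (k - 1 - \bar{k}_j) + O(1).
\end{equation*}

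Next, specialize to $k = k_{2j}$ and match with Lemma~\ref{g16}'s expansions $u_k = (k-2) u_0 + O(1)$ and $l_k = (k-2) l_0 + O(1)$ to obtain
\begin{equation*}
U_j = u_0 \bar{k}_j + O(1), \quad L_j = l_0 \bar{k}_j + O(1).
\end{equation*}
Since $\bar{k}_j \geq k_{1j} \geq j$ by Lemma~\ref{g7} and $l_0 < 0$, a direct computation gives
\[ t_j - s_0 = \frac{U_j l_0 - u_0 L_j}{l_0 L_j} = \frac{O(1)}{l_0^2 \bar{k}_j + O(1)} = O(1/j). \]
Combining with $|s_k - s_0| \leq C_2/k \leq C_2/j$ from Lemma~\ref{g16} yields $|t_j - s_k| \leq C_3/j$ for every $k \in [k_{1j}, k_{2j}]$, proving the lemma.

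The main obstacle is the careful control of the error terms: the uniform $O(1)$ bound on $\xi(i,n) - (n-1) u_0$ must survive the weighted sum $\sum_\sigma m_\sigma$, and the \emph{same} quantity $\bar{k}_j$ must emerge as the leading-order coefficient in both $U_j$ and $L_j$. This matching is what produces the sharp cancellation $U_j l_0 - u_0 L_j = O(1)$ and hence the rate $O(1/j)$, rather than a naive $O(1)$ bound.
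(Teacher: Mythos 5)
Your proof is correct, but it follows a genuinely different route from the paper. The paper's proof stays at the level of "sandwiching": it defines the defect $\zeta(\sigma)=m_\sigma(\log m_\sigma-u_{|\sigma|})$, exploits the normalization $\sum_{\tau\in G_{k_{2j}}}\zeta(\tau)=0$ to bound $\bigl|\sum_{\sigma\in\Lambda_j}\zeta(\sigma)\bigr|\leq C_1$, and similarly for the denominator; it then compares $t_j$ to the weighted mediant $\sum_\sigma m_\sigma u_{|\sigma|}/\sum_\sigma m_\sigma l_{|\sigma|}$ and uses the mediant inequality (all $l_k$ negative) to trap this between $\min_k s_k$ and $\max_k s_k$ over $k\in[k_{1j},k_{2j}]$, which supplies $k_j^{(1)}$ and $k_j^{(2)}$. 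You instead go straight for $|t_j-s_0|=O(1/j)$: you expand $\xi(i,n)=(n-1)u_0+O(1)$ and $\lambda(i,n)=(n-1)l_0+O(1)$ (both consequences of the estimates (\ref{suc1}), (\ref{s23})--(\ref{s26}) from Lemmas \ref{lem0} and \ref{g16}), introduce the effective level $\bar{k}_j=\sum_{\sigma\in\Lambda_j}m_\sigma|\sigma|$, and match the two expansions of $u_{k_{2j}}$ and $l_{k_{2j}}$ to extract $U_j=u_0\bar{k}_j+O(1)$ and $L_j=l_0\bar{k}_j+O(1)$; the identical $\bar{k}_j$ in both yields the cancellation $U_jl_0-u_0L_j=O(1)$ against a denominator $\asymp j$. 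Your route effectively proves Lemma \ref{z6} first and deduces Lemma \ref{lem1} from it together with Lemma \ref{g16}, reversing the paper's logical order (the paper derives Lemma \ref{z6} from Lemma \ref{lem1}), and it yields the slightly stronger conclusion $|t_j-s_k|\leq C_3/j$ for every $k\in[k_{1j},k_{2j}]$, so any choice $k_j^{(1)}=k_j^{(2)}=k_{1j}$ works. A minor bonus of your approach is that it sidesteps the implicit mediant-inequality step that the paper uses without comment; the cost is the slightly more explicit bookkeeping of $\bar{k}_j$.
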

\begin{proof}
For $k\geq 1$ and $\sigma\in G_k$, we have
\begin{eqnarray}\label{z5}
&&\sum_{\tau\in\Gamma(\sigma,k_{2j}-|\sigma|)}m_\tau\log m_\tau=
\sum_{\tau\in\Gamma(\sigma,k_{2j}-|\sigma|)}m_\tau\big(\log m_\sigma+\log\frac{m_\tau}{m_\sigma}\big)\nonumber\\
&&\;\;\;\;=m_\sigma\log m_\sigma+m_\sigma\sum_{\tau\in\Gamma(\sigma,k_{2j}-|\sigma|)}\frac{m_\tau}{m_\sigma}\log\frac{m_\tau}{m_\sigma}
\nonumber\\&&\;\;\;\;=m_\sigma\log m_\sigma+m_\sigma\xi(\sigma_{|\sigma|},k_{2j}-|\sigma|).
\end{eqnarray}
By (\ref{z5}) and Lemma \ref{lem0}, for every $k\in[k_{1j},k_{2j}]$ and $\omega\in G_k$, we have
\begin{eqnarray}\label{z16}
&&u_{k_{2j}}-u_k=\sum_{\sigma\in G_k}\sum_{\tau\in\Gamma(\sigma,k_{2j}-k)}m_\tau\log m_\tau-u_k=\sum_{\sigma\in G_k}m_\sigma\xi(\sigma_k,k_{2j}-k)\nonumber\\&&\leq
\sum_{\sigma\in G_k}m_\sigma(\xi(\omega_k,k_{2j}-k)+C_1)=
\xi(\omega_k,k_{2j}-k)+C_1.
\end{eqnarray}
Similarly, we have that $u_{k_{2j}}-u_k\geq\xi(\omega_k,k_{2j}-k)-C_1$. We define
$$
\zeta(\sigma):=m_\sigma(\log m_\sigma-u_{|\sigma|}),\;\;\sigma\in G^*\setminus\{\theta\}.
$$
Then, by (\ref{z5}) and (\ref{z16}), we deduce
\begin{eqnarray*}
&&\sum_{\tau\in\Gamma(\sigma,k_{2j}-|\sigma|)}\zeta(\tau)=\sum_{\tau\in\Gamma(\sigma,k_{2j}-|\sigma|)}m_\tau(\log m_\tau-u_{k_{2j}})\\&&\;\;=m_\sigma\log m_\sigma+m_\sigma\xi(\sigma_{|\sigma|},k_{2j}-|\sigma|)-m_\sigma u_{k_{2j}}\\
&&\left\{\begin{array}{ll}\leq m_\sigma\log m_\sigma+m_\sigma(u_{k_{2j}}-u_{|\sigma|}+C_1)-m_\sigma u_{k_{2j}}=\zeta(\sigma)+m_\sigma C_1.\\
\geq m_\sigma\log m_\sigma+m_\sigma(u_{k_{2j}}-u_{|\sigma|}-C_1)-m_\sigma u_{k_{2j}}=\zeta(\sigma)-m_\sigma C_1\end{array}\right.
\end{eqnarray*}
This is equivalent to
\[
\sum_{\tau\in\Gamma(\sigma,k_{2j}-|\sigma|}\zeta(\tau)-m_\sigma C_1\leq\zeta(\sigma)\leq\sum_{\tau\in\Gamma(\sigma,k_{2j}-|\sigma|}\zeta(\tau)+m_\sigma C_1.
\]
Note that $\sum_{\tau\in G_{k_{2j}}}\zeta(\tau)=0$. We further deduce
\begin{eqnarray*}
&&\sum_{\sigma\in\Lambda_j}m_\sigma(\log m_\sigma-u_{|\sigma|})=\sum_{\sigma\in\Lambda_j}\zeta(\sigma)\\&&
\left\{\begin{array}{ll}\leq \sum_{\sigma\in\Lambda_j}\big(\sum_{\tau\in\Gamma(\sigma,k_{2j}-|\sigma|}\zeta(\tau)+m_\sigma C_1\big)=C_1.\\
\geq\sum_{\sigma\in\Lambda_j}\big(\sum_{\tau\in\Gamma(\sigma,k_{2j}-|\sigma|}\zeta(\tau)-m_\sigma C_1\big)=-C_1.\end{array}\right.
\end{eqnarray*}
As an immediate consequence, we have
\begin{eqnarray}\label{z2}
\sum_{\sigma\in\Lambda_j}m_\sigma u_{|\sigma|}-C_1\leq\sum_{\sigma\in\Lambda_j}m_\sigma\log m_\sigma\leq\sum_{\sigma\in\Lambda_j}m_\sigma u_{|\sigma|}+C_1.
\end{eqnarray}
Similarly, one can show that
\begin{eqnarray}\label{z3}
\sum_{\sigma\in\Lambda_j}m_\sigma l_{|\sigma|}-C_1\leq\sum_{\sigma\in\Lambda_j}m_\sigma\log c_\sigma\leq\sum_{\sigma\in\Lambda_j}m_\sigma l_{|\sigma|}+C_1.
\end{eqnarray}
Thus, for large $j$, by (\ref{z2}) and (\ref{z3}), we have
\begin{eqnarray}\label{z10}
\frac{\sum_{\sigma\in\Lambda_j}m_\sigma u_{|\sigma|}+C_1}{\sum_{\sigma\in\Lambda_j}m_\sigma l_{|\sigma|}-C_1}\leq\frac{\sum_{\sigma\in\Lambda_j}m_\sigma\log m_\sigma}{\sum_{\sigma\in\Lambda_j}m_\sigma\log c_\sigma}\leq\frac{\sum_{\sigma\in\Lambda_j}m_\sigma u_{|\sigma|}-C_1}{\sum_{\sigma\in\Lambda_j}m_\sigma l_{|\sigma|}+C_1}.
\end{eqnarray}
Let $A_7:=2^{-1}|l_0|$. Note that $l_k,k\geq 2$, are all negative. By (\ref{s25}) and (\ref{s26}),
\begin{eqnarray}\label{z14}
|l_k|\geq (k-2)|l_0|-\delta_0\geq2^{-1}k|l_0|=A_7k,\;\;k\geq4+\delta_0|l_0|^{-1}.
\end{eqnarray}
This, together with the definition of $k_{1j}$, implies
\begin{eqnarray}\label{z4}
\bigg|\sum_{\sigma\in\Lambda_j}m_\sigma l_{|\sigma|}-C_1\bigg|\geq\sum_{\sigma\in\Lambda_j}m_\sigma |l_{|\sigma|}|\geq A_7k_{1j}.
\end{eqnarray}
By Lemma \ref{g16}, we have, $s_k\leq 2s_0$ for all large $k$. Hence,
\[
\frac{\sum_{\sigma\in\Lambda_j}m_\sigma u_{|\sigma|}}{\sum_{\sigma\in\Lambda_j}m_\sigma l_{|\sigma|}}\leq\max_{k_{1j}\leq k\leq k_{2j}}s_k\leq 2s_0.
\]
Combining this with (\ref{z4}), we have
\begin{eqnarray}\label{z12}
&&\bigg|\frac{\sum_{\sigma\in\Lambda_j}m_\sigma u_{|\sigma|}+C_1}{\sum_{\sigma\in\Lambda_j}m_\sigma l_{|\sigma|}-C_1}-\frac{\sum_{\sigma\in\Lambda_j}m_\sigma u_{|\sigma|}}{\sum_{\sigma\in\Lambda_j}m_\sigma l_{|\sigma|}}\bigg|\nonumber\\&&=\frac{C_1\big(|\sum_{\sigma\in\Lambda_j}m_\sigma l_{|\sigma|}|+|\sum_{\sigma\in\Lambda_j}m_\sigma u_{|\sigma|}|\big)}{|\big(\sum_{\sigma\in\Lambda_j}m_\sigma l_{|\sigma|}-C_1\big)\sum_{\sigma\in\Lambda_j}m_\sigma l_{|\sigma|}|}\leq\frac{C_1(1+2s_0)}{A_7k_{1j}}.
\end{eqnarray}
For large $j$, we have, $C_1\leq 2^{-1}|\sum_{\sigma\in\Lambda_j}m_\sigma l_{|\sigma|}|$. Hence, we similarly get
\begin{eqnarray}\label{z11}
&&\bigg|\frac{\sum_{\sigma\in\Lambda_j}m_\sigma u_{|\sigma|}-C_1}{\sum_{\sigma\in\Lambda_j}m_\sigma l_{|\sigma|}+C_1}-\frac{\sum_{\sigma\in\Lambda_j}m_\sigma u_{|\sigma|}}{\sum_{\sigma\in\Lambda_j}m_\sigma l_{|\sigma|}}\bigg|\leq\frac{2C_1(1+2s_0)}{A_7k_{1j}}.
\end{eqnarray}
Set $A_8:=2C_1(1+2s_0)A_7^{-1}$. By (\ref{z10}), (\ref{z12}) and (\ref{z11}), we deduce
\begin{eqnarray*}\label{g15}
\frac{\sum_{\sigma\in\Lambda_j}m_\sigma u_{|\sigma|}}{\sum_{\sigma\in\Lambda_j}m_\sigma l_{|\sigma|}}-\frac{A_8}{k_{1j}}
\leq\frac{\sum_{\sigma\in\Lambda_j}m_\sigma\log m_\sigma}{\sum_{\sigma\in\Lambda_j}m_\sigma\log c_\sigma}
\leq\frac{\sum_{\sigma\in\Lambda_j}m_\sigma u_{|\sigma|}}{\sum_{\sigma\in\Lambda_j}m_\sigma l_{|\sigma|}}+\frac{A_8}{k_{1j}}.
\end{eqnarray*}
Now one can see that, there exist some $k_j^{(i)}\in[k_{1j},k_{2j}],i=1,2$, such that
\begin{eqnarray*}
s_{k_j^{(1)}}=\min_{k_{1j}\leq k\leq k_{2j}}\frac{u_k}{l_k}\leq
\frac{\sum_{\sigma\in\Lambda_j}m_\sigma u_{|\sigma|}}{\sum_{\sigma\in\Lambda_j}m_\sigma l_{|\sigma|}}
\leq\max_{k_{1j}\leq k\leq k_{2j}}\frac{u_k}{l_k}=s_{k_j^{(2)}}.
\end{eqnarray*}
Thus, in view of (\ref{g7}), the lemma follows by setting $C_3:=A_8/A_1$.
\end{proof}

With the above analysis, we obtain the convergence order of $(t_j)_{j=1}^\infty$:
\begin{lemma}\label{z6}
Assume that $P$ is irreducible. There exists a constant $C_4$ such that $|t_j-s_0|<C_4 j^{-1}$ for all large $j$.
\end{lemma}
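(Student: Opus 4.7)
The plan is to deduce Lemma \ref{z6} by chaining together the three preceding results: Lemma \ref{g7}(i) (which controls the range of $|\sigma|$ for $\sigma \in \Lambda_j$), Lemma \ref{g16} (which gives $|s_k - s_0| \leq C_2 k^{-1}$), and Lemma \ref{lem1} (which sandwiches $t_j$ between $s_{k_j^{(1)}}$ and $s_{k_j^{(2)}}$ up to an $O(j^{-1})$ error, for indices $k_j^{(i)} \in [k_{1j}, k_{2j}]$).

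The first step is to invoke Lemma \ref{lem1} to obtain integers $k_j^{(1)}, k_j^{(2)} \in [k_{1j}, k_{2j}]$ such that
\begin{equation*}
s_{k_j^{(1)}} - C_3 j^{-1} \leq t_j \leq s_{k_j^{(2)}} + C_3 j^{-1}.
\end{equation*}
By Lemma \ref{g7}(i), both $k_j^{(1)}$ and $k_j^{(2)}$ are at least $A_1 j$, hence they tend to infinity with $j$. Applying Lemma \ref{g16} to each of them, for all sufficiently large $j$ we obtain
\begin{equation*}
|s_{k_j^{(i)}} - s_0| \leq C_2 (k_j^{(i)})^{-1} \leq C_2 (A_1 j)^{-1}, \quad i = 1, 2.
\end{equation*}

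Combining these two bounds yields
\begin{equation*}
s_0 - C_2 (A_1 j)^{-1} - C_3 j^{-1} \leq t_j \leq s_0 + C_2 (A_1 j)^{-1} + C_3 j^{-1},
\end{equation*}
so the constant $C_4 := C_2 A_1^{-1} + C_3$ does the job. Since this is essentially a two-line bookkeeping argument, I do not expect any serious obstacle; the only care required is to check that the indices $k_j^{(i)}$ really do lie in the range where Lemma \ref{g16} applies, which is guaranteed by the lower bound $k_j^{(i)} \geq k_{1j} \geq A_1 j$ from Lemma \ref{g7}(i). All the substantive work (the irreducibility-based hereditary estimate (\ref{s1}), the ergodic averaging behind Lemma \ref{g16}, and the passage from $G_k$-sums to $\Lambda_j$-sums in Lemma \ref{lem1}) has already been carried out in the earlier lemmas.
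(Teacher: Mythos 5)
Your proof is correct and follows essentially the same route as the paper: combine Lemma \ref{lem1} to sandwich $t_j$ between $s_{k_j^{(1)}}$ and $s_{k_j^{(2)}}$ up to $C_3 j^{-1}$, use Lemma \ref{g7}(i) to bound $k_j^{(i)} \geq A_1 j$, then apply Lemma \ref{g16}, arriving at the identical constant $C_4 = C_2 A_1^{-1} + C_3$.
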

\begin{proof}
By Lemmas \ref{g16}, \ref{lem1} and Lemma \ref{g7}, we have
\begin{eqnarray*}
t_j-s_0\left\{\begin{array}{ll}\leq s_{k_j^{(2)}}-s_0+C_3j^{-1}\leq (C_2A_1^{-1}+C_3)j^{-1}\\
\geq s_{k_j^{(1)}}-s_0-C_3j^{-1}\geq-(C_2A_1^{-1}+C_3)j^{-1}\end{array}\right..
\end{eqnarray*}
Hence, the lemma follows by setting $C_4:=C_2A_1^{-1}+C_3$.
\end{proof}

Now we are able to give the proof of Theorem \ref{mthm1}. For a Borel probability measure $\nu$ on $\mathbb{R}^q$ and every $n\geq 1$, we write
$$
Q_n(\nu,a):=a^{-1}\log n+\hat{e}_n(\nu),\;\;a>0.
$$

\emph{Proof of Theorem \ref{mthm1}}
By (\ref{lambdaj}), we easily see
\begin{eqnarray}\label{z9}
\underline{p}^{j+1}\leq p_\sigma<\underline{p}^j;\;\;\overline{q}^{-1}\underline{p}^{-j}\leq\psi_j\leq\underline{q}^{-1}\underline{p}^{-(j+1)}.
\end{eqnarray}
Since $t_j\to s_0$, we have, $2^{-1}s_0\leq t_j\leq 2s_0$ for all large $j$. By (\ref{z9}),
\begin{eqnarray*}
&&\sum_{\sigma\in\Lambda_j}m_\sigma\log c_\sigma=t_j^{-1}\sum_{\sigma\in\Lambda_j}m_\sigma\log m_\sigma=t_j^{-1}\sum_{\sigma\in\Lambda_j}m_\sigma\log (q_{\sigma_1}p_\sigma)\\&&=t_j^{-1}\sum_{\sigma\in\Lambda_j}m_\sigma\log q_{\sigma_1}+t_j^{-1}\sum_{\sigma\in\Lambda_j}m_\sigma\log p_\sigma\left\{\begin{array}{ll}\leq (2s_0)^{-1}\log\overline{q}+t_j^{-1}\log\underline{p}^j\\
\geq 2s_0^{-1}\log\underline{q}+t_j^{-1}\log\underline{p}^{j+1}\end{array}\right..
\end{eqnarray*}
This, together with Lemma \ref{characterization}, yields
\begin{eqnarray*}\label{g17}
 2s_0^{-1}\log\underline{q}+t_j^{-1}\log\underline{p}^{j+1}+C_0\leq\hat{e}_{\psi_j}(\mu)\leq t_j^{-1}\log\underline{p}^j+(2s_0)^{-1}\log\overline{q}.
\end{eqnarray*}
Thus, by Lemma \ref{lem1}, (\ref{z9}) and the fact that $2^{-1}s_0\leq t_j\leq 2s_0$, we deduce
\begin{eqnarray*}
&&Q_{\psi_j}(\mu,s_0)=s_0^{-1}\log\psi_j+\hat{e}_{\psi_j}(\mu)\\&&\;\;\left\{\begin{array}{ll}\leq (s_0^{-1}-t_j^{-1})\log\underline{p}^{-j}-s_0^{-1}\log(\underline{q}\underline{p})+(2s_0)^{-1}\log\overline{q}\\
\geq (s_0^{-1}-t_j^{-1})\log\underline{p}^{-j}+2s_0^{-1}\log\underline{q}+2s_0^{-1}\log\underline{p}+C_0\end{array}\right..
\end{eqnarray*}
Finally, using Lemma \ref{z6}, we obtain
\begin{eqnarray*}
Q_{\psi_j}(\mu,s_0)\left\{\begin{array}{ll}\leq 2C_4s_0^{-2}\log\underline{p}^{-1}-s_0^{-1}\log(\underline{q}\underline{p})+(2s_0)^{-1}\log\overline{q}\\
\geq -2C_4s_0^{-2}\log\underline{p}^{-1}+2s_0^{-1}\log\underline{q}+2s_0^{-1}\log\underline{p}+C_0\end{array}\right..
\end{eqnarray*}
Hence, $0<\underline{P}_0^{s_0}(\mu)\leq\overline{P}_0^{s_0}(\mu)<\infty$.
By Lemma \ref{g7}, the theorem follows.

\vspace{0.2cm}
In the following, we study the asymptotic geometric mean error for the conditional probability measures $\mu_i$. For every $i\in G_1$, we write
\begin{eqnarray*}
t_k(i):=\frac{\sum_{\sigma\in\Lambda_k(i)}p_\sigma\log p_\sigma}{\sum_{\sigma\in\Lambda_k(i)}p_\sigma\log c_\sigma};\;\;
s_k(i):=\frac{\sum_{\sigma\in G_k(i)}p_\sigma\log p_\sigma}{\sum_{\sigma\in G_k(i)}p_\sigma\log c_\sigma},\;\;k\geq 2.
\end{eqnarray*}
Let us denote by $u_k(i),l_k(i)$ the numerator and denominator in the definition of $s_k(i)$. We have the following estimate for the convergence order of $(t_j(i))_{j=1}^\infty$.
\begin{lemma}\label{g24}
Assume that $P$ is irreducible. There exists a constant $C_5$ such that $|t_j(i)-s_0|<C_5 j^{-1}$ for every $i\in G_1$ and all large $j$.
\end{lemma}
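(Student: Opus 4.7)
The plan is to mirror the chain Lemma \ref{g16} $\to$ Lemma \ref{lem1} $\to$ Lemma \ref{z6} verbatim, but with $\mu$ replaced by the conditional measure $\mu_i$, the weight $m_\sigma$ replaced by $p_\sigma$, and $G_k,\Lambda_k$ replaced by $G_k(i),\Lambda_k(i)$. At each step I will verify that every constant can be taken independent of $i\in G_1$; since the constants appearing in Lemma \ref{lem0} and equation (\ref{suc1}) depend only on $P$ (and not on the starting state), this uniformity should come for free.

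First I will establish the analogue of Lemma \ref{g16}: there is a $C_2'>0$ such that $|s_k(i)-s_0|\leq C_2'k^{-1}$ for all large $k$ and every $i$. Observe that $u_k(i)=\xi(i,k-1)$ and $l_k(i)=\lambda(i,k-1)$. The decomposition (\ref{g20}) and its analogue for $\lambda$ yield
\begin{eqnarray*}
u_k(i)=\xi(i,1)+(k-2)u_0+x_k',\quad l_k(i)=\lambda(i,1)+(k-2)l_0+y_k',
\end{eqnarray*}
where $|x_k'|$ and $|y_k'|$ are bounded uniformly in $i$ by the constants $\delta_0$ and $\delta_3$ from the proof of Lemma \ref{lem0}. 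Combined with $|l_k(i)|\geq A_7 k$ for large $k$---which follows from the same reasoning as (\ref{z14}), since $A_7$ depends only on $l_0$ and $\delta_0$---the routine ratio computation in Lemma \ref{g16} then produces $|s_k(i)-s_0|=O(k^{-1})$ uniformly in $i$.

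Next I will establish the analogue of Lemma \ref{lem1}: there exist $C_3'$ and integers $k_j^{(1)}(i),k_j^{(2)}(i)\in[k_{1j}(i),k_{2j}(i)]$ such that
\begin{eqnarray*}
s_{k_j^{(1)}(i)}(i)-C_3'j^{-1}\leq t_j(i)\leq s_{k_j^{(2)}(i)}(i)+C_3'j^{-1}.
\end{eqnarray*}
The telescoping identity (\ref{z5}) carries over to $\mu_i$ unchanged since it depends only on the multiplicative structure of $p_\sigma$, and Lemma \ref{lem0} once again supplies the crucial uniform bound $|\xi(\omega_k,k_{2j}(i)-k)-\xi(\omega_k',k_{2j}(i)-k)|\leq C_1$ that drives (\ref{z2})--(\ref{z11}). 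One further needs the analogue of Lemma \ref{g7}(i), namely $A_1 j\leq k_{1j}(i)\leq k_{2j}(i)\leq A_2 j$ with constants independent of $i$; this follows from the identical argument, picking a shortest and a longest word in $\Lambda_j(i)$ and bounding their $p$-weights via $\underline{p}$ and $\overline{p}$.

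Combining the two steps exactly as in the proof of Lemma \ref{z6}, and setting $C_5:=C_2'/A_1+C_3'$, gives the desired conclusion. The only non-routine point is confirming uniformity in $i\in G_1$, but because the transition matrix and the geometric data $\{c_{ij}\}$ are fixed, every constant we invoke depends solely on $P$ and on $\{c_{ij}\}$. Thus I anticipate no genuine obstacle beyond carefully tracking the $i$-dependence through each estimate.
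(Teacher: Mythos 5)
Your proposal is correct and follows the paper's overall strategy (control $|s_k(i)-s_0|$, prove the $\mu_i$-analogue of Lemma~\ref{lem1}, combine as in Lemma~\ref{z6}), but the first step is handled a little differently, and in a way that is actually cleaner. You bound $|s_k(i)-s_0|$ directly by feeding $u_k(i)=\xi(i,k-1)$ and $l_k(i)=\lambda(i,k-1)$ into the decomposition (\ref{g20})/(\ref{suc1}), giving $u_k(i)=u_2(i)+(k-2)u_0+x_k'$ and $l_k(i)=l_2(i)+(k-2)l_0+y_k'$ with $|x_k'|\leq\delta_0$, $|y_k'|\leq\delta_3$ uniformly in $i$, and then running the ratio estimate from Lemma~\ref{g16}. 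The paper instead first compares $s_k(i)$ to $s_k$, setting $a_k:=u_k(i)-N^{-1}u_k$, $b_k:=l_k(i)-N^{-1}l_k$, and only then invokes Lemma~\ref{g16}; that intermediate step rests on the identity $u_k=\sum_{j=1}^N u_k(j)$, which is not literally correct since $u_k$ carries the weights $m_\sigma=q_{\sigma_1}p_\sigma$ while $\sum_j u_k(j)$ carries $p_\sigma$ (so $a_k$ as defined grows linearly in $k$ when $u_0\neq 0$, and the claimed bound $|a_k|\leq C_1$ needs repair, e.g.\ by replacing $N^{-1}u_k$, $N^{-1}l_k$ with $u_k$, $l_k$). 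Your decomposition sidesteps this entirely. The remaining two steps---the analogue of Lemma~\ref{lem1} over $\Lambda_j(i)$ (the paper gets $A_1j\leq k_{1j}(i)\leq k_{2j}(i)\leq A_2j$ more quickly via the inclusion $\Lambda_j(i)\subset\Lambda_j$, but your direct re-derivation also works) and the final combination with the uniformity-in-$i$ check---coincide with the paper's.
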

\begin{proof}
Fix an arbitrary $i\in G_1$. By Lemma \ref{lem0}, for every pair $l,h\in G_1$,
\begin{eqnarray*}
u_k(h)-C_1\leq u_k(l)=\xi(l,k-1)\leq\xi(h,k-1)+C_1= u_k(h)+C_1.
\end{eqnarray*}
Thus, for the above $i$ and $k\geq 2$, we have
\begin{eqnarray*}
Nu_k(i)-NC_1\leq u_k=\sum_{j=1}^Nu_k(j)\leq Nu_k(i)+NC_1.
\end{eqnarray*}
Set $a_k:=u_k(i)-N^{-1}u_k$ and $b_k:=l_k(i)-N^{-1}l_k$. Then $|a_k|,|b_k|\leq C_1$. Note that $|l_k|\to\infty$
as $k\to\infty$. Hence, for large $k$, we have,
$$
|N^{-1}l_k+b_k|\geq 2^{-1}|N^{-1}l_k|\;\;{\rm and}\;\;s_k\leq 2s_0.
$$
Using these facts and (\ref{z14}), we deduce
\begin{eqnarray}\label{g23}
&&|s_k(i)-s_k|=\bigg|\frac{u_k(i)}{l_k(i)}-\frac{u_k}{l_k}\bigg|=\bigg|\frac{N^{-1}u_k+a_k}{N^{-1}l_k+b_k}-\frac{u_k}{l_k}\bigg|
\nonumber\\&&\leq\frac{C_1(|u_k|+|l_k|)}{|l_k(N^{-1}l_k+b_k)|}\leq\frac{C_1}{|N^{-1}l_k+b_k|}+
\frac{C_1|u_k|}{|l_k(N^{-1}l_k+b_k)|}\nonumber\\&&\leq\frac{2N_1C_1}{|l_k|}+
\frac{2N_1C_1s_k}{|l_k|}\leq\frac{2N_1C_1(1+2s_0)}{k A_7}=:A_9k^{-1}.
\end{eqnarray}
Let $k_{1j}(i):=\min_{\sigma\in\Lambda_j(i)}|\sigma|$ and $k_{2j}(i):=\max_{\sigma\in\Lambda_j(i)}|\sigma|$. By Lemma \ref{g7},
\[
A_1j\leq k_{1j}\leq k_{1j}(i)\leq k_{2j}(i)\leq k_{2j}\leq A_2j.
\]
Along the line in the proof of Lemma \ref{lem1}, one can show, there exist a constant $C_3(i)$ and two integers $k_j^{(h)}\in[k_{1j}(i),k_{2j}(i)],h=1,2$, such that
\begin{equation*}
s_{k_j^{(1)}}(i)-C_3(i)j^{-1}\leq t_j(i)\leq s_{k_j^{(2)}}(i)+C_3(i)j^{-1}.
\end{equation*}
Combining this and (\ref{g23}), we have
\begin{eqnarray*}
t_j(i)-s_0&\leq& s_{k_j^{(2)}}(i)+C_3(i)j^{-1}-s_0\leq A_9k_j^{(2)}(i)^{-1}+C_3(i)j^{-1}\\&\leq&
 A_9A_1^{-1}j^{-1}+C_3(i)j^{-1}=(A_9A_1^{-1}+C_3(i))j^{-1}.
\end{eqnarray*}
Similarly, one can show that $t_j(i)-s_0\geq -(A_9A_1^{-1}+C_3(i))j^{-1}$. Thus, the lemma follows by setting $C_5:=A_9A_1^{-1}+\max_{i\in G_1}C_3(i)$.
\end{proof}
\begin{proposition}\label{g28}
Assume that $P$ is irreducible. Then, we have
$$
0<\underline{Q}_0^{s_0}(\mu_i)\leq\overline{Q}_0^{s_0}(\mu_i)<\infty,\;\;i\in G_1.
$$
\end{proposition}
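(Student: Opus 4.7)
The plan is to mirror the proof of Theorem \ref{mthm1} with $\mu$ replaced by $\mu_i$, $\Lambda_j$ by $\Lambda_j(i)$, $\psi_j$ by $\psi_j(i)$, and $t_j$ by $t_j(i)$. All the necessary ingredients are already in place: the characterization (\ref{mui}) substitutes for Lemma \ref{characterization}, and Lemma \ref{g24} supplies the rate $|t_j(i)-s_0|=O(j^{-1})$ playing the role of Lemma \ref{z6}.

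First I would establish the analog of Lemma \ref{g7} for $\mu_i$: there exist positive constants $A_1(i),A_2(i),N_2(i)$ with $A_1(i)j\leq k_{1j}(i)\leq k_{2j}(i)\leq A_2(i)j$ and $\psi_j(i)\leq\psi_{j+1}(i)\leq N_2(i)\psi_j(i)$. The argument is verbatim that of Lemma \ref{g7}: both bounds rest only on the choice of $N_1$ with $\overline{p}^{N_1}<\underline{p}$, which is independent of $i$. As in part (ii) of that lemma, this reduces the proposition to showing $0<\underline{P}_0^{s_0}(\mu_i)\leq\overline{P}_0^{s_0}(\mu_i)<\infty$, where these are defined using the subsequence $(\psi_j(i))_{j\geq 1}$.

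Since $\Lambda_j(i)$ is a finite antichain partitioning $G_\infty(i)$, one has $\sum_{\sigma\in\Lambda_j(i)}p_\sigma=1$, and the bounds $p_\sigma\in[\underline{p}^{j+1},\underline{p}^j)$ give $\underline{p}^{-j}\leq\psi_j(i)\leq\underline{p}^{-(j+1)}$. Writing $\sum_{\sigma\in\Lambda_j(i)}p_\sigma\log c_\sigma=t_j(i)^{-1}\sum_{\sigma\in\Lambda_j(i)}p_\sigma\log p_\sigma$ and using $(j+1)\log\underline{p}\leq\sum_{\sigma\in\Lambda_j(i)}p_\sigma\log p_\sigma<j\log\underline{p}$, together with (\ref{mui}) and the fact that $t_j(i)\to s_0>0$ eventually pins $t_j(i)$ in $[s_0/2,2s_0]$, yields a two-sided estimate of $\hat{e}_{\psi_j(i)}(\mu_i)$ by expressions of the form $t_j(i)^{-1}j\log\underline{p}+O(1)$.

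Combining this with $\log\psi_j(i)\in[-j\log\underline{p},-(j+1)\log\underline{p}]$, the quantity $Q_{\psi_j(i)}(\mu_i,s_0)=s_0^{-1}\log\psi_j(i)+\hat{e}_{\psi_j(i)}(\mu_i)$ equals $(s_0^{-1}-t_j(i)^{-1})(-j\log\underline{p})+O(1)$, which by Lemma \ref{g24} is $O(j^{-1})\cdot j+O(1)=O(1)$. Hence $\underline{P}_0^{s_0}(\mu_i)>0$ and $\overline{P}_0^{s_0}(\mu_i)<\infty$, and the analog of Lemma \ref{g7}(ii) finishes the proof. There is no substantive obstacle here: once the earlier lemmas are in hand, this is essentially a bookkeeping translation of the argument that derived Theorem \ref{mthm1}, the only mildly non-obvious point being the identity $\sum_{\sigma\in\Lambda_j(i)}p_\sigma=1$, which holds because $\Lambda_j(i)$ is a stopping-time cut through the infinite tree of words starting at $i$.
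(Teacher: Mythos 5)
Your proposal is correct and takes exactly the route the paper takes: the paper's own proof is the one-liner ``It suffices to follow the proof of Theorem \ref{mthm1} by using (\ref{mui}) and Lemma \ref{g24}.'' You have simply (and correctly) supplied the details that the paper omits, including the clean observation that $\sum_{\sigma\in\Lambda_j(i)}p_\sigma=1$, which in fact makes the $\mu_i$ case slightly tidier than the $\mu$ case since the $\underline{q},\overline{q}$ factors disappear.
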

\begin{proof}
It suffices to follow the proof of Theorem \ref{mthm1} by using (\ref{mui}) and Lemma \ref{g24}. We omit the details.
\end{proof}

Next, we show that, when the transition matrix $P$ is reducible, $D_0(\mu)$ is dependent on the initial probability vector. For this, we need the following observation which is a consequence of the arguments in \cite[Example 4.1]{GL:04}.
\begin{proposition}\label{g27}
Let $\nu_i,1\leq i\leq N$, be Borel probability measures on $\mathbb{R}^q$ of compact support such that, for all $\epsilon>0$, we have
$$
\max_{1\leq i\leq N}\sup_{x\in\mathbb{R}^q}\nu_i(B(x,\epsilon))\leq d_1\epsilon^{d_2}.
$$
Assume that $D_0(\nu_i)=t_i>0,i\in G_1$. Let $(q_i)_{i=1}^N$ be a probability vector with $q_i>0$ for all $i\in G_1$.
Then for $\nu=\sum_{i=1}^Nq_i\nu_i$, we have $D_0(\nu)=t_0$, where
\[
t_0=\frac{t_1t_2\cdots t_N}{q_1t_2\cdots t_N+\ldots+q_N t_1\cdots t_{N-1}}.
\]
Moreover, we have, $\underline{Q}_0^{t_0}(\nu)>0$ if $\underline{Q}_0^{t_i}(\nu_i)>0$ for all $1\leq i\leq N$; and $\overline{Q}_0^{t_0}(\nu)<\infty$ if $\overline{Q}_0^{t_i}(\nu_i)<\infty$ for all $1\leq i\leq N$.
\end{proposition}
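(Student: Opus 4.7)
The plan rests on recognising the algebraic identity
\[
\frac{1}{t_0}=\sum_{i=1}^N\frac{q_i}{t_i},
\]
which one verifies by dividing the numerator and denominator of the stated expression for $t_0$ by $t_1t_2\cdots t_N$. Given this, the strategy is to sandwich $\hat{e}_n(\nu)$ between two estimates that both behave like $-(1/t_0)\log n$ up to an additive $O(1)$ term; this simultaneously delivers the dimension formula and the coefficient bounds.

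For the lower bound I would exploit linearity of $\nu=\sum_iq_i\nu_i$: for every finite $\alpha\subset\mathbb{R}^q$ of cardinality at most $n$,
\[
\int\log d(x,\alpha)\,d\nu=\sum_{i=1}^Nq_i\int\log d(x,\alpha)\,d\nu_i\geq\sum_{i=1}^Nq_i\hat{e}_n(\nu_i),
\]
so taking the infimum in $\alpha$ gives $\hat{e}_n(\nu)\geq\sum_iq_i\hat{e}_n(\nu_i)$. For the matching upper bound I would allocate codebook sizes in proportion to $q_i/t_i$: set $n_i:=\lfloor n(q_i/t_i)/\sum_j(q_j/t_j)\rfloor$, so that $\sum_in_i\leq n$ and $n_i=\Theta(n)$. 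The uniform density bound together with \cite[Theorem 2.5]{GL:00} yields $\alpha_i\in C_{n_i}(\nu_i)$; setting $\alpha:=\bigcup_i\alpha_i$ one has $|\alpha|\leq n$ and $d(x,\alpha)\leq d(x,\alpha_i)$, whence
\[
\hat{e}_n(\nu)\leq\int\log d(x,\alpha)\,d\nu\leq\sum_{i=1}^Nq_i\hat{e}_{n_i}(\nu_i).
\]

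Substituting $\hat{e}_n(\nu_i)=-(1+o(1))(\log n)/t_i$, which follows from $D_0(\nu_i)=t_i$, and noting that $\log n_i=\log n+O(1)$ on the upper side, both estimates collapse to $-(1/t_0+o(1))\log n$, giving $D_0(\nu)=t_0$. For the coefficient claims, the hypothesis $\underline{Q}_0^{t_i}(\nu_i)>0$ furnishes constants $\kappa_i$ with $\hat{e}_n(\nu_i)\geq-(1/t_i)\log n-\kappa_i$ for all large $n$; plugging into the lower bound yields $\hat{e}_n(\nu)+(1/t_0)\log n\geq-\sum_iq_i\kappa_i$, i.e.\ $\underline{Q}_0^{t_0}(\nu)>0$. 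Symmetrically, $\overline{Q}_0^{t_i}(\nu_i)<\infty$ propagates through the upper bound, combined with $\log n_i=\log n+O(1)$, to give $\overline{Q}_0^{t_0}(\nu)<\infty$. The argument is essentially routine once the sandwich is spotted; the only place where one needs mild care is in checking that the $O(1)$ errors introduced by integer rounding of the $n_i$ remain uniformly bounded in $n$, and in invoking the density hypothesis to secure existence of $C_{n_i}(\nu_i)$.
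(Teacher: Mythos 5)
Your proof is correct and follows essentially the same route as the paper: the paper cites \cite[Example~4.1]{GL:04} for exactly the sandwich $\sum_i q_i\hat{e}_n(\nu_i)\leq\hat{e}_n(\nu)\leq\sum_i q_i\hat{e}_{[n/N]}(\nu_i)$ and then reads off the dimension and coefficient bounds via the shifted quantities $Q_n(\cdot,a)=a^{-1}\log n+\hat{e}_n(\cdot)$, which is what you do. Your only deviation is allocating codebook sizes proportionally to $q_i/t_i$ rather than uniformly as $\lfloor n/N\rfloor$; since either choice gives $\log n_i=\log n+O(1)$, this is immaterial, and the paper additionally records the slightly sharper product bounds $\prod_i(\underline{Q}_0^{t_i}(\nu_i))^{q_i}\leq\underline{Q}_0^{t_0}(\nu)\leq\overline{Q}_0^{t_0}(\nu)\leq e^{C_5}\prod_i(\overline{Q}_0^{t_i}(\nu_i))^{q_i}$, which your argument also yields if you keep track of the constants.
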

\begin{proof}
We denote by $[x]$ the largest integer not greater than $x\in\mathbb{R}$. By the arguments in \cite[Example 4.1]{GL:04}, we have
\begin{eqnarray}\label{g25}
\hat{e}_n(\nu)\left\{\begin{array}{ll}\geq q_1\hat{e}_n(\nu_1)+\ldots+q_N\hat{e}_n(\nu_N)\\
\leq q_1\hat{e}_{[\frac{n}{N}]}(\nu_1)+\ldots+q_N\hat{e}_{[\frac{n}{N}]}(\nu_N)\end{array}\right..
\end{eqnarray}
By (\ref{quanerrordef}) and (\ref{g25}),we easily see that $D_0(\nu)=t_0$. Furthermore, by (\ref{g25}),
\begin{eqnarray}\label{z21}
Q_n(\nu,t_0)\geq t_0^{-1}\log n+\sum_{i=1}^Nq_i\hat{e}_{n}(\nu_i))=\sum_{i=1}^Nq_i Q_n(\nu_i,t_i).
\end{eqnarray}
Set $C_5:=(q_1t_1^{-1}+\ldots+q_Nt_N^{-1})\log (2N)$. Then, for all $n\geq 2N$, we have
\begin{eqnarray}\label{z22}
Q_n(\nu,t_0)\leq t_0^{-1}\log n+\sum_{i=1}^Nq_i\hat{e}_{[\frac{n}{N}]}(\nu_i)=
\sum_{i=1}^Nq_i Q_{[\frac{n}{N}]}(\nu_i,t_i)+C_5.
\end{eqnarray}
By (\ref{z21}) and (\ref{z22}), we conclude
\begin{eqnarray*}
\prod_{i=1}^N(\underline{Q}_0^{t_i}(\nu_i))^{q_i}\leq\underline{Q}_0^{t_0}(\nu)\leq
\overline{Q}_0^{t_0}(\nu)\leq e^{C_5}\prod_{i=1}^N(\overline{Q}_0^{t_i}(\nu_i))^{q_i}.
\end{eqnarray*}
This implies the second assertion of the proposition.
\end{proof}

\begin{example}{\rm
Let $Q_1=(q_{ij})_{i,j=1}^2, Q_2=(t_{ij})_{i,j=3}^4$ be positive row-stochastic matrices ($q_{ij}>0,1\leq i,j\leq 2$ and $t_{ij}>0,3\leq i,j\leq 4$). Let $P$ denote the block diagonal matrix ${\rm diag}(Q_1,Q_2)$. Then $P$ is reducible. Let $(c_{ij})_{4\times 4}$ be given and assume that (\ref{g4}) holds. Let $\mu$ be the Markov-type measure associated with $P$ and initial probability vector $(q_i)_{i=1}^4$. Clearly, $Q_1,Q_2$ are both irreducible. Let $(v_i^{(h)})_{i=1}^2$ be the normalized positive left eigenvector of $Q_h$ for $h=1,2$. Write
\begin{eqnarray*}
t_1:=\frac{\sum_{i=1}^2 v_i^{(1)}\sum_{j=1}^2q_{ij}\log q_{ij}}{\sum_{i=1}^2 v_i^{(1)}\sum_{j=1}^2q_{ij}\log c_{ij}};\;\;
t_2:=\frac{\sum_{i=3}^4 v_i^{(2)}\sum_{j=3}^4t_{ij}\log t_{ij}}{\sum_{i=3}^4 v_i^{(2)}\sum_{j=3}^4t_{ij}\log c_{ij}}.
\end{eqnarray*}
By Proposition \ref{g28}, we have, $0<\underline{Q}_0^{t_1}(\mu_i)\leq\overline{Q}_0^{t_1}(\mu_i)<\infty$ for $i=1,2$; and for $i=3,4$, $0<\underline{Q}_0^{t_2}(\mu_i)\leq\overline{Q}_0^{t_2}(\mu_i)<\infty$. Set
$$
t_0:=\frac{t_1t_2}{(q_1+q_2)t_2+(q_3+q_4)t_1}.
$$
Then by Proposition \ref{g27},  we have, $0<\underline{Q}_0^{t_0}(\mu)\leq\overline{Q}_0^{t_0}(\mu)<\infty$.
In this example, $D_0(\mu)$ depends on the initial probability vector provided that $t_1\neq t_2$.
}\end{example}

\noindent{\bf Acknowledgement} The author thanks the referee for some helpful comments.

\end{document}